\newtheorem{theorem}{Theorem}
\newtheorem{proposition}[theorem]{Proposition}
\newtheorem{corollary}[theorem]{Corollary}
\newtheorem{remark}{Remark}
\numberwithin{equation}{section}
\newcommand{\N}{\mathbb{N}}
\newcommand{\R}{\mathbb{R}}
\newcommand{\PP}{\mathsf{P}} 
\newcommand{\EE}{\mathsf{E}} 
\newcommand{\Bias}{\mathsf{Bias}} 
\newcommand{\Var}{\mathsf{Var}} 
\newcommand{\Cov}{\mathsf{Cov}} 
\newcommand{\bb}[1]{\boldsymbol{#1}}
\newcommand{\OO}{\mathcal{O}}
\newcommand{\oo}{\mathrm{o}}
\newcommand{\rd}{\mathrm{d}}
\newcommand{\ind}{\mathds{1}}
\newcommand{\leqdef}{\vcentcolon=}
\newcommand{\reqdef}{=\vcentcolon}
\begin{document}

\begin{frontmatter}

\title{A Bernstein polynomial approach for the estimation of cumulative distribution functions in the presence of missing data}

\author[a1]{Rihab Gharbi}\ead{rihab.gharbi@fst.utm.tn}
\author[a2]{Wissem Jedidi}\ead{wjedidi@ksu.edu.sa}
\author[a1]{Salah Khardani}\ead{salah.khardani@fst.utm.tn}
\author[a3]{Fr\'ed\'eric Ouimet\corref{mycorrespondingauthor}}\ead{frederic.ouimet2@uqtr.ca}

\address[a1]{Laboratoire des \'equations aux d\'eriv\'ees partielles, Universit\'e de Tunis El-Manar, Tunisia}
\address[a2]{Department of Statistics and Operations Research, College of Science, King Saud University, Saudi Arabia}
\address[a3]{D\'epartement de math\'ematiques et d'informatique, Universit\'e du Qu\'ebec \`a Trois-Rivi\`eres, Canada\vspace{-6mm}}

\cortext[mycorrespondingauthor]{Corresponding author. Email address: frederic.ouimet2@uqtr.ca}

\begin{abstract}
We study nonparametric estimation of univariate cumulative distribution functions (CDFs) pertaining to data missing at random. The proposed estimators smooth the inverse probability weighted (IPW) empirical CDF with the Bernstein operator, yielding monotone, $[0,1]$-valued curves that automatically adapt to bounded supports. We analyze two versions: a pseudo estimator that uses known propensities and a feasible estimator that uses propensities estimated nonparametrically from discrete auxiliary variables, the latter scenario being much more common in practice. For both, we derive pointwise bias and variance expansions, establish the optimal polynomial degree $m$ with respect to the mean integrated squared error, and prove the asymptotic normality. A key finding is that the feasible estimator has a smaller variance than the pseudo estimator by an explicit nonnegative correction term. We also develop an efficient degree selection procedure via least-squares cross-validation. Monte Carlo experiments show that, for small to moderate sample sizes, the Bernstein-smoothed pseudo and feasible estimators outperform their unsmoothed counterparts and the integrated version of the IPW kernel density estimator proposed by \citet{MR2569798}, under certain models. A real-data application to fasting plasma glucose from the 2017--2018 NHANES survey illustrates the method in a practical setting. All code needed to reproduce our analyses is readily accessible on GitHub.
\end{abstract}

\begin{keyword}
Asymptotics, Bernstein estimator, cumulative distribution function estimation, inverse probability weighting, missing at random, missing data, nonparametric estimation.
\MSC[2020]{Primary: 62G05; Secondary: 62E20, 62G08, 62G20}
\end{keyword}

\end{frontmatter}

\vspace{-5mm}
\section{Introduction}

The cumulative distribution function (CDF) and its inverse (the quantile function) are fundamental objects in probability and statistics, providing a complete description of a random variable's distribution. They offer insights into the entire distribution, including tail behavior, that summary measures like the mean or variance cannot capture. Accurate estimation of CDFs is crucial across numerous fields, for example, in survival analysis (where the CDF relates to survival probabilities), reliability engineering (where the CDF gives failure probabilities and reliability over time), economics (where the CDF characterizes income, wealth, or return distributions), and risk management (where the loss CDF underpins tail-risk metrics such as Value-at-Risk), because one often needs to understand the full distribution rather than just a single parameter.

In practice, statistical analysis is frequently complicated by incomplete data. Missing observations are ubiquitous in large studies, clinical trials, and surveys. If data are missing in a nontrivial fashion, standard nonparametric estimators such as the empirical CDF computed from observed cases can be biased for the true distribution unless the data are missing completely at random, that is, the missingness mechanism is independent of the data values \citep[p.14]{LittleRubin2019}. A more plausible assumption in many contexts is missing at random (MAR), which posits that the probability that an observation is missing may depend on fully observed auxiliary variables but not on the value of the missing observation itself \citep{MR455196}. Under MAR, one can obtain unbiased estimates by appropriately reweighting or imputing the observed data. A prominent approach is inverse probability weighting (IPW), inspired by the Horvitz--Thompson estimator from survey sampling \citep{MR53460}. In IPW, each observed case is weighted by the inverse of its observation probability (called the propensity score), often estimated from covariates \citep{MR742974}, yielding a weighted empirical CDF (sometimes called a pseudo-empirical CDF). When propensity scores are known or consistently estimated, this IPW empirical CDF is asymptotically unbiased for the full-population CDF.

Alternative nonparametric strategies for MAR data include imputation-based methods and hybrid approaches. For instance, \citet{MR1379049} proposed a kernel regression imputation to consistently estimate the CDF and quantiles, proving strong uniform consistency and asymptotic normality. Hybrid strategies, in particular augmented IPW estimators, have been developed to improve efficiency by combining weighting with outcome imputation. \citet{MR2644095} introduced an augmented IPW empirical-likelihood approach for CDFs with missing responses that delivers asymptotic Gaussian limits for the process. These methods underscore that, under MAR assumptions, one can recover distributional information using either weighting or imputation, or both, without fully specifying outcome models. Furthermore, these estimation strategies have also been adapted to the more complex case of nonignorable missing data (NMAR). Related work by \citet{MR3782668} considered IPW, regression imputation, and an augmented approach for distribution and quantile estimation under a semiparametric NMAR model, finding that all three can reach the same asymptotic variance under suitable conditions.

However, the IPW-based empirical CDF, like the ordinary empirical CDF, is a step function. When the underlying distribution is continuous, it is often desirable to smooth such step-function estimates to improve efficiency, reduce mean squared error (MSE), and produce a smooth, continuous CDF estimate. Smoothing can yield substantial gains; some smooth CDF estimators have been shown to asymptotically outperform the raw empirical CDF in mean integrated squared error (MISE). Kernel smoothing is a popular approach in the density estimation context; see, e.g., \citet{MR79873,MR143282}, or \citet{MR848134,MR1319818} for book treatments. This approach extends naturally to CDFs by integrating the kernel density estimate, resulting in the classical smooth CDF estimator introduced by \citet{MR166862}, which replaces the indicator steps of the empirical CDF with integrated kernels. An alternative strategy involves direct regression smoothing of the empirical CDF. For example, \citet{MR1947061} proposed a local linear estimator that can achieve a smaller asymptotic MISE than the integrated kernel estimator, although it does not guarantee monotonicity. These kernel methods have been adapted to handle missing-data scenarios; for example, \citet{MR2569798} developed an IPW kernel density estimator (KDE) under MAR. For CDFs under MAR, a natural idea is to smooth the IPW variant using similar techniques. For instance, one of the competitors considered in our Monte Carlo study (Section~\ref{sec:simulations}) is an integrated version of Dubnicka's estimator.

Smoothing the CDF can improve pointwise variance and yield well-behaved estimates, but one must address the well-known boundary bias that traditional kernel estimators exhibit on bounded supports; see, e.g., \citet{doi:10.17713/ajs.v49i1.801}. If the support of the distribution is [0,1], integrated-KDE CDF estimators tend to incur bias near 0 and 1 due to a spill-over effect created by the fixed kernel. To mitigate this, various boundary-correction techniques have been proposed, including reflecting the data at boundaries or using boundary kernels tailored for the edges \citep{MR3072469}. A user-friendly and highly effective strategy employs asymmetric kernels, which inherently respect support limits and ensure monotonicity by construction; see, e.g., \citet{doi:10.3390/math9202605,jsci2024349}. Alternatively, monotonicity constraints can be imposed on smoothing techniques that do not naturally guarantee it. For instance, \citet{MR2662606} proposed a constrained polynomial spline approach that smooths the empirical CDF while enforcing monotonicity, thereby guaranteeing a valid CDF estimate.

In this paper, the focus is on CDFs supported on the unit interval $[0,1]$ (more general bounded supports can often be transformed to $[0,1]$). For such cases, Bernstein polynomials offer an elegant and effective smoothing technique. Bernstein polynomials \citep{Bernstein_1912} were originally introduced as a constructive proof of the Weierstrass approximation theorem \citep{Weierstrass_1885}, which asserts the existence of uniform polynomial approximations for continuous functions over a closed interval. When used for smoothing an empirical CDF, Bernstein operators \citep[see, e.g.,][]{MR3585481} produce estimates that automatically respect the boundaries, mapping $[0,1]$ into $[0,1]$, and are genuine CDFs, since they are monotone non-decreasing and bounded between 0 and 1 by construction. They also enjoy shape-preserving properties, meaning that the smoothed curve inherits the qualitative shape constraints of a CDF. The idea of leveraging Bernstein polynomials for nonparametric estimation dates back to \citet{MR0397977}, who studied a Bernstein estimator for density functions on a bounded interval. \citet{MR1910059} and \citet{MR2270097} later demonstrated the utility of Bernstein polynomials for CDF estimation, applying them to smooth the empirical CDF and the associated normalized histogram under strongly mixing data. Subsequent work has analyzed the statistical properties of Bernstein estimators in detail. \citet{MR2960952,MR2925964} derived higher-order bias and variance expansions for the Bernstein CDF estimator and showed that it has asymptotically negligible boundary bias and can be more efficient than the (unsmoothed) empirical CDF. There have also been several developments and extensions of Bernstein-type smoothers. For example, \citet{MR3740720} and \citet{MR4365177} developed and analyzed a recursive approach to Bernstein CDF estimation, including a Robbins--Monro style estimator and moderate deviation theory. \citet{MR3899096} introduced an alternative CDF estimator using rational Bernstein polynomials, which generalizes the classical Bernstein approach to improve flexibility. For distributions supported on $[0,\infty)$, \citet{MR4330318} proposed using Szasz--Mirakyan positive linear operators, an analog of Bernstein operators for $[0,\infty)$, to construct smooth CDF estimators for lifetime or survival data, and they showed that this method outperforms the empirical CDF in MSE and MISE as well. Moreover, Bernstein polynomials have been adapted to other complex settings. For example, \citet{MR3630225} estimated conditional CDFs by smoothing regression estimators, and \citet{MR4765928} extended Bernstein-polynomial CDF and quantile estimation to right-censored data, highlighting their broad applicability. All of these approaches share a common goal of producing a smoother and more efficient estimate of the CDF while enforcing the shape constraints that are intrinsic to CDFs.

Despite the rich literature on handling missing data and on smooth CDF estimation separately, relatively little attention has been given to smoothing CDF estimators in the presence of missing data. This work aims to fill that gap by developing and analyzing a nonparametric CDF estimator for MAR data that marries the IPW principle with Bernstein polynomial smoothing. In other words, the IPW-adjusted empirical CDF, which corrects bias due to MAR missingness, is smoothed using the Bernstein operator (see \eqref{eq:Bernstein.operator} below) to obtain a smooth and shape-constrained estimate. This approach inherits two benefits: unbiasedness under MAR from the weighting and reduced variance plus automatic boundary adaptation from the Bernstein smoothing.

The remainder of the paper is organized as follows. Section~\ref{sec:setup} introduces the statistical framework, defines the Bernstein operator, formulates the MAR setting, describes propensity-score estimation from discrete covariates, and constructs the Bernstein-smoothed IPW estimators $\smash{\widetilde{F}_{n,m}}$ (when propensities are known) and $\smash{\widehat{F}_{n,m}}$ (when propensities are estimated). Section~\ref{sec:results} presents the results: pointwise bias and variance, optimal choices of $m$ based on MSE and MISE, and asymptotic normality. Results are given first for the pseudo estimator with known propensities (Section~\ref{sec:asymp.tilde.F.n.m}) and then for the feasible estimator with estimated propensities (Section~\ref{sec:asymp.hat.F.n.m}), where we quantify the variance reduction from estimating the propensity scores. Section~\ref{sec:simulations} reports a Monte Carlo study comparing the smoothed and unsmoothed estimators across sample sizes, including the integrated-IPW KDE of \citet{MR2569798} as a benchmark. Section~\ref{sec:application} applies the feasible Bernstein-smoothed estimator to a fasting plasma glucose dataset. Section~\ref{sec:summary.outlook} summarizes the contributions and outlines directions for future research. All proofs are collected in Section~\ref{sec:proofs}. For reproducibility, Appendix~\ref{app:code} links to the \textsf{R} code used to generate the figures, the simulation results, and the real-data application. Appendix~\ref{app:acronyms} provides a list of acronyms used throughout.

\section{The setup}\label{sec:setup}

For any continuous function $\varphi:[0,1]\mapsto \R$, the Bernstein operator,
\begin{equation}\label{eq:Bernstein.operator}
\mathcal{B}_m(\varphi)(y) = \sum_{k=0}^m \varphi(k/m) \binom{m}{k} y^k (1 - y)^{m-k}, \quad y\in [0,1], ~~m\in \N,
\end{equation}
defines a sequence of bounded polynomials, $\{\mathcal{B}_m(\varphi)\}_{m\in \N}$, which converges uniformly to $\varphi$. The weight function that smooths out the discrete values of $\varphi$ over the mesh $\{0,1/m,2/m,\ldots,1\}$ corresponds to the probability mass function of a binomial distribution as a function of its success probability parameter $y$. This weight function is denoted, for all $y\in [0,1]$ and $k\in \{0,\ldots,m\}$, by
\[
b_{m,k}(y) = \binom{m}{k} y^k (1 - y)^{m-k}.
\]

\begin{remark}\label{rem:rescaling}
The Bernstein operator in \eqref{eq:Bernstein.operator} is defined for functions on $[0,1]$, so our theory is stated on that support. This is not restrictive for bounded responses: if a variable $Z$ is supported on $[a,b]$ with $a<b$, then the linearly rescaled variable $U=(Z-a)/(b-a)\in[0,1]$ can be analyzed by our method, and the original CDF is recovered by $F_Z(z)=F_U((z-a)/(b-a))$ for $z\in[a,b]$.
\end{remark}

Consider the following setting. In the population under study, there are a continuous response variable $Y$ subject to missingness and a discrete auxiliary variable $\bb{X} = (X_1,\ldots,X_d)$, for some $d\in \N$, which is fully observed. This is a common scenario in practice, for example with categorical demographics in surveys. Take a random sample of $n$ independent and identically distributed (i.i.d.) pairs $(\bb{X}_1,Y_1),(\bb{X}_2,Y_2),\ldots,(\bb{X}_n,Y_n)$ from the population. For each unit $i\in \{1,\ldots,n\}$ in the sample, let $Y_i$ and $\bb{X}_i = (X_{i1}, \ldots,X_{id})$ denote the $i$th response and $i$th covariate, respectively. Let the Bernoulli random variable
\[
\delta_i
= \ind_{\{Y_i ~\text{is observed}\}}
=
\begin{cases}
1, &\mbox{if } Y_i~\text{is observed}, \\
0, &\mbox{otherwise},
\end{cases}
\]
indicate whether $Y_i$ is observed or not. We assume that the distribution of $\delta_i$ given $\bb{X}_i$ is defined by the {\it propensity score}:
\[
\pi(\bb{X}_i) = \PP\left(\delta_i = 1 \mid Y_i, \bb{X}_i\right) = \PP\left(\delta_i=1\mid \bb{X}_i\right).
\]
Note that $\delta_i$ is independent of $Y_i$ conditionally on $\bb{X}_i$.

The goal is to estimate the unknown CDF $F$ of the observations $Y_1,\ldots,Y_n$, which are MAR. We investigate two versions of the estimator based on the knowledge of the propensity scores.

The first version assumes that propensities are known, which is the case, for example, when the missingness mechanism is designed. In this case, $F$ can be estimated by the following IPW pseudo-empirical estimator:
\begin{equation}\label{eq:tilde.F.n}
\widetilde{F}_n(y) = n^{-1} \sum_{i=1}^n \frac{\delta_i}{\pi(\bb{X}_i)} \ind_{\{Y_i \leq y\}}.
\end{equation}

The second and more realistic version addresses the fact that, in practice, the propensity scores are usually unknown. In this paper, we restrict attention to auxiliary variables $\bb{X}$ with discrete finite support. This choice is mainly for expositional convenience: it avoids the tedious task of treating the discrete and continuous cases in parallel. In the discrete case, propensity score estimation for MAR adjustment is simplified, allowing us to rely on the following nonparametric estimator:
\begin{equation}\label{eq:hat.pi}
\hat{\pi}_i(\bb{X}_{1:n}) = \frac{\sum_{j=1}^n \delta_j \, \ind_{\{\bb{X}_j = \bb{X}_i\}}}{\sum_{k=1}^n \ind_{\{\bb{X}_k = \bb{X}_i\}}}.
\end{equation}
If the auxiliary variable were continuous, one would simply replace \eqref{eq:hat.pi} by a Nadaraya--Watson estimator; see \citet[Eq.~(4)]{MR2569798}. The main asymptotic results stated in Section~\ref{sec:results} would remain the same, but presenting both settings side by side would add substantial technical bookkeeping. For this reason, we formulate the theory only for the discrete case. The CDF $F$ can then be estimated using the feasible empirical estimator:
\[
\widehat{F}_{n}(y)= n^{-1} \sum_{i=1}^n \frac{\delta_i}{\hat{\pi}_i(\bb{X}_{1:n})} \ind_{\{Y_i \leq y\}}.
\]

The aim of this paper is to show that we can smooth $\widetilde{F}_n$ and $\widehat{F}_n$ using Bernstein polynomials to improve their performance. Applying the Bernstein operator yields an oracle or pseudo estimator $\smash{\widetilde{F}_{n,m}}$ when propensities are known, and a feasible estimator $\smash{\widehat{F}_{n,m}}$ when propensities are estimated nonparametrically from the data. More specifically, define, for all $y\in [0,1]$ and $m\in \N$,
\begin{equation}\label{eq:estimators}
\begin{aligned}
\widetilde{F}_{n,m}(y)
&= \mathcal{B}_m(\widetilde{F}_n)(y)
= \sum_{k=0}^m \widetilde{F}_n(k/m) \, b_{m,k}(y)
= n^{-1} \sum_{k=0}^m \sum_{i=1}^n \frac{\delta_i}{\pi(\bb{X}_i)} \ind_{\{Y_i \leq k/m\}} b_{m,k}(y), \\
\widehat{F}_{n,m}(y)
&= \mathcal{B}_m(\widehat{F}_n)(y)
= \sum_{k=0}^m \widehat{F}_n(k/m) \, b_{m,k}(y)
= n^{-1} \sum_{k=0}^m \sum_{i=1}^n \frac{\delta_i}{\hat{\pi}_i(\bb{X}_{1:n})} \ind_{\{Y_i \leq k/m\}} b_{m,k}(y).
\end{aligned}
\end{equation}

\section{Results}\label{sec:results}

In this section, we present theoretical properties of the proposed Bernstein estimators. We derive asymptotic expansions for the pointwise bias and variance, determine the optimal polynomial degree $m$ that minimizes the MSE and MISE, and establish asymptotic normality for both the pseudo and feasible estimators defined in \eqref{eq:estimators}.

\paragraph{Assumptions}
Beyond the setup presented in Section~\ref{sec:setup}, we assume the following:
\begin{enumerate}[label=(A\arabic*)]\setlength\itemsep{0em}
\item The propensity scores are bounded from below, i.e., $\pi_{\min} \leqdef \min_{\bb{x}} \pi(\bb{x}) > 0$. \label{ass:1}
\item The distribution of $\bb{X}$ is supported on finitely many values, so that $p_{\min} \leqdef \min_{\bb{x}} p(\bb{x}) > 0$. \label{ass:2}
\item The CDF $F$ (equivalently, $F_{Y_1}$) is three times continuously differentiable on $(0,1)$. \label{ass:3}
\item The CDF $F_{Y_1 \mid \bb{X}_1}$ is twice continuously differentiable on $(0,1)$. \label{ass:4}
\end{enumerate}

\paragraph{Notation}
Throughout the paper, $f$ (equivalently, $f_{Y_1}$) denotes the density of $Y_1$, and similarly $\smash{f_{Y_1 \mid \bb{X}_1}}$ denotes the conditional density of $Y_1$ given $\bb{X}_1$. Since $Y_1$ is a continuous random variable, note that $f = F'$ and $\smash{f_{Y_1 \mid \bb{X}_1} = F_{Y_1 \mid \bb{X}_1}'}$. The degree parameter $m = m(n)$ is always assumed to be a function of the sample size $n$, and further $m\to \infty$ whenever $n\to \infty$. For real sequences $(U_n)$ and $(V_n)$, the notation $U_n = \OO(V_n)$ means that $\limsup_{n\to \infty} |U_n / V_n| \leq C < \infty$, where the nonnegative~constant~$C$ may depend on $d$, $\pi_{\min}$, $p_{\min}$, $F$, or $F_{Y_1 \mid \bb{X}_1}$, but no other variable unless explicitly written as a subscript. Typically, the dependence is pointwise on the variable $y$, in which case one writes $U_n = \OO_y(V_n)$. More specifically, $U_n = \OO_y(V_n)$ means that for any fixed $y$, there exists $C_y > 0$ such that $\limsup_{n\to \infty} |U_n / V_n| \leq C_y < \infty$. In some places, the Vinogradov notation $U_n \ll V_n$ is used to indicate $U_n = \OO(V_n)$ more concisely. Similarly, the notation $U_n = \oo(V_n)$ means that $\lim_{n\to \infty} |U_n / V_n| = 0$. Subscripts indicate which parameters the convergence rate can depend on. More generally, if $U_n,V_n$ are random variables, we write $U_n = \OO_{L^2}(V_n)$ if $\limsup_{n\to \infty} \EE[|U_n / V_n|^2] \leq C < \infty$ and $U_n = \oo_{L^2}(V_n)$ if $\lim_{n\to \infty} \EE[|U_n / V_n|^2] = 0$. If the rate depends on a variable such as $y$, we add a subscript.

\subsection{Asymptotic properties of \texorpdfstring{$\smash{\widetilde{F}_{n,m}}$}{tilde{F}\_n,m}}\label{sec:asymp.tilde.F.n.m}

We begin by analyzing the pseudo estimator $\smash{\widetilde{F}_{n,m}}$, which utilizes known propensity scores. This serves as a benchmark for the feasible estimator analyzed later.

The following proposition establishes the asymptotics of the pointwise bias. Since the underlying pseudo-empirical estimator $\widetilde{F}_n$ is unbiased for $F$, the bias of the smoothed estimator $\smash{\widetilde{F}_{n,m}}$ is entirely attributable to the Bernstein smoothing process, confirming the standard bias rate of order $m^{-1}$.

\begin{proposition}[Bias]\label{prop:bias.tilde.F.n.m}
Suppose that Assumptions~\ref{ass:1}~and~\ref{ass:3} hold. Then, for any $y\in (0,1)$, we have, as $n\to \infty$,
\[
\Bias(\widetilde{F}_{n,m}(y)) = \EE[\widetilde{F}_{n,m}(y)] - F(y) = m^{-1} B(y) + \oo_y(m^{-1}),
\]
where
\begin{equation}\label{eq:def.B}
B(y) = \frac{1}{2} y (1 - y) f'(y).
\end{equation}
\end{proposition}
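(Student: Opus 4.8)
The plan is to exploit the linearity of the Bernstein operator together with the unbiasedness of $\widetilde{F}_n$, which collapses the bias to a purely deterministic approximation-theoretic quantity. Since $\mathcal{B}_m$ acts linearly on its argument and the weights $b_{m,k}(y)$ are nonrandom, taking expectations inside the sum in \eqref{eq:estimators} gives
\[
\EE[\widetilde{F}_{n,m}(y)] = \sum_{k=0}^m \EE[\widetilde{F}_n(k/m)]\, b_{m,k}(y) = \mathcal{B}_m\big(\EE[\widetilde{F}_n(\cdot)]\big)(y),
\]
so everything reduces to computing $\EE[\widetilde{F}_n(\cdot)]$ and then the error incurred by applying $\mathcal{B}_m$.

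First I would verify that $\widetilde{F}_n$ is exactly unbiased for $F$. Conditioning each summand on $(Y_i,\bb{X}_i)$ and using that $\EE[\delta_i \mid Y_i,\bb{X}_i] = \PP(\delta_i=1\mid Y_i,\bb{X}_i) = \pi_i(\bb{X}_i)$, the inverse-propensity weight cancels the conditional probability of observation, yielding
\[
\EE\Big[\tfrac{\delta_i}{\pi_i(\bb{X}_i)}\ind_{\{Y_i\le y\}}\Big] = \EE\big[\ind_{\{Y_i\le y\}}\big] = F(y),
\]
hence $\EE[\widetilde{F}_n(y)] = F(y)$ for every $y\in[0,1]$. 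Combined with the display above, this shows $\EE[\widetilde{F}_{n,m}(y)] = \mathcal{B}_m(F)(y)$, so the bias equals the deterministic quantity $\mathcal{B}_m(F)(y) - F(y)$.

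It then remains to expand this Bernstein approximation error. I would use the probabilistic representation $\mathcal{B}_m(F)(y) = \EE[F(T_m/m)]$, where $T_m \sim \mathrm{Binomial}(m,y)$, so that $\EE[T_m/m]=y$ and $\Var(T_m/m)=y(1-y)/m$. A second-order Taylor expansion of $F$ about $y$ (legitimate under Assumption~\ref{ass:3}) gives, after taking expectations and using that the first-order term vanishes,
\[
\mathcal{B}_m(F)(y) - F(y) = \tfrac{1}{2} F''(y)\,\EE\big[(T_m/m - y)^2\big] + R_m(y) = \frac{y(1-y)}{2m} f'(y) + R_m(y),
\]
where $F'' = f'$ and $R_m(y)$ collects the remainder. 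Multiplying through by $m$ identifies the leading coefficient as $B(y)$ in \eqref{eq:def.B}.

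The main obstacle is showing $R_m(y) = \oo(m^{-1})$ uniformly in $y$. Writing the Lagrange remainder as $\tfrac{1}{2}[F''(\xi) - F''(y)](T_m/m - y)^2$ for some $\xi$ between $y$ and $T_m/m$, I would split the expectation according to whether $|T_m/m - y|$ is below or above a threshold $\rho$. On the event $\{|T_m/m-y|<\rho\}$ the uniform continuity of $f' = F''$ on the compact interval $[0,1]$ makes the bracket smaller than any prescribed $\varepsilon$, contributing at most $\tfrac{\varepsilon}{2}\,\Var(T_m/m)=\OO(\varepsilon/m)$; on the complementary event, Chebyshev's inequality against the fourth central moment of $T_m/m$, which is $\OO(m^{-2})$, bounds the contribution by $\OO_\rho(m^{-2}) = \oo(m^{-1})$. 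Since $\varepsilon$ is arbitrary and all constants are controlled uniformly in $y\in[0,1]$, this yields $R_m(y)=\oo(m^{-1})$ uniformly, completing the argument.
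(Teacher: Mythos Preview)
Your proposal is correct and follows essentially the same route as the paper: both first use the IPW cancellation to reduce $\EE[\widetilde{F}_{n,m}(y)]$ to $\mathcal{B}_m(F)(y)$, then expand $F$ to second order and invoke the binomial moment identities. The only difference is the remainder control: the paper bounds $\sum_k |k/m-y|^3 b_{m,k}(y)=\OO(m^{-3/2})$ via Cauchy--Schwarz, whereas your near/far splitting with uniform continuity of $f'$ and the fourth moment is slightly more careful in that it uses only the $C^2$ regularity actually assumed in \ref{ass:3}.
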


Next, we examine the pointwise variance of the pseudo estimator. The expansion reveals two main components: the leading term $n^{-1} \sigma^2(y)$, corresponding to the variance of the unsmoothed IPW empirical CDF, and a negative correction term of order $n^{-1} m^{-1/2}$. This correction term explicitly quantifies the variance reduction achieved by the Bernstein smoothing.

\begin{proposition}[Variance]\label{prop:variance.tilde.F.n.m}
Suppose that Assumptions~\ref{ass:1}, \ref{ass:3}, and \ref{ass:4} hold. Then, for any $y\in (0,1)$, we have, as $n\to \infty$,
\[
\Var(\widetilde{F}_{n,m}(y)) = n^{-1} \sigma^2(y) - n^{-1} m^{-1/2} V(y) + \oo_y(n^{-1} m^{-1/2}),
\]
where
\begin{equation}\label{eq:def.sigma2.V}
\sigma^2(y) = \EE\left[\frac{F_{Y_1 \mid \bb{X}_1}(y)}{\pi(\bb{X}_1)}\right] - \{F(y)\}^2, \qquad
V(y) = \sqrt{\frac{y(1-y)}{\pi}} \EE\left[\frac{f_{Y_1 \mid \bb{X}_1}(y)}{\pi(\bb{X}_1)}\right].
\end{equation}
\end{proposition}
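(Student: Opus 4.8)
The plan is to analyze the variance of the Bernstein-smoothed pseudo estimator directly from its double-sum representation in \eqref{eq:estimators}. Write $\smash{\widetilde{F}_{n,m}(y) = n^{-1} \sum_{i=1}^n Z_i(y)}$, where $Z_i(y) = \sum_{k=0}^m \frac{\delta_i}{\pi_i(\bb{X}_i)} \ind_{\{Y_i \leq k/m\}} b_{m,k}(y)$. Since the units are iid, the variance factors as $\Var(\widetilde{F}_{n,m}(y)) = n^{-1} \Var(Z_1(y))$, so the entire problem reduces to computing $\Var(Z_1(y)) = \EE[Z_1(y)^2] - \{\EE[Z_1(y)]\}^2$ up to the required order. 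The first moment is straightforward: conditioning on $(\bb{X}_1, Y_1)$ and using $\EE[\delta_1 \mid Y_1, \bb{X}_1] = \pi_1(\bb{X}_1)$ together with the MAR assumption shows $\EE[\delta_1 \ind_{\{Y_1 \leq k/m\}} / \pi_1(\bb{X}_1)] = F(k/m)$, so $\EE[Z_1(y)] = \sum_k F(k/m) b_{m,k}(y) = \mathcal{B}_m(F)(y)$, which by Proposition~\ref{prop:bias.tilde.F.n.m} equals $F(y) + \OO(m^{-1})$.

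The substantive work is the second moment. Expanding $Z_1(y)^2$ produces a double sum over $k,\ell$, and the key simplification comes from $\delta_1^2 = \delta_1$ (since $\delta_1$ is Bernoulli) and $\ind_{\{Y_1 \leq k/m\}} \ind_{\{Y_1 \leq \ell/m\}} = \ind_{\{Y_1 \leq \min(k,\ell)/m\}}$. Conditioning on $\bb{X}_1$ gives $\EE[\delta_1 \ind_{\{Y_1 \leq k/m\}} / \pi_1(\bb{X}_1)^2] = \EE[F_{Y_1 \mid \bb{X}_1}(k/m) / \pi_1(\bb{X}_1)]$. I would thus organize the second moment as $\EE[Z_1(y)^2] = \sum_{k,\ell} \EE[F_{Y_1 \mid \bb{X}_1}(\min(k,\ell)/m) / \pi_1(\bb{X}_1)] \, b_{m,k}(y) b_{m,\ell}(y)$. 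Writing $\psi(t) \equiv \EE[F_{Y_1 \mid \bb{X}_1}(t) / \pi_1(\bb{X}_1)]$ and noting $\psi(1) = \sigma^2(y)$-type quantities collapse appropriately, the leading contribution of the diagonal-dominant double sum yields $\EE[F_{Y_1 \mid \bb{X}_1}(y)/\pi_1(\bb{X}_1)]$, which combined with the subtracted $\{F(y)\}^2$ produces exactly $\sigma^2(y)$ at order $n^{-1}$.

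The main obstacle, and the source of the $-n^{-1} m^{-1/2} V(y)$ correction, is extracting the next-order term from the double sum $\sum_{k,\ell} \psi(\min(k,\ell)/m) b_{m,k}(y) b_{m,\ell}(y)$. The plan is to rewrite $\min(k,\ell) = \tfrac{1}{2}(k + \ell - |k - \ell|)$ so that the sum splits into a part depending on $k+\ell$ (which reconstructs $\psi$ evaluated near $y$ via the binomial mean) and a part involving $\EE[|k-\ell|]$ under the product binomial weights. The expectation of $|k - \ell|$ where $k,\ell$ are independent $\mathrm{Binomial}(m,y)$ variables is the local-limit/central-limit heart of the computation: by a normal approximation, $k - \ell$ is approximately centered Gaussian with variance $2 m y(1-y)$, so $\EE|k-\ell| \sim \sqrt{2 m y(1-y)} \cdot \sqrt{2/\pi} = 2\sqrt{m y(1-y)/\pi}$, which after division by the $m$ in the argument of $\psi'$ and a Taylor expansion $\psi(\min(k,\ell)/m) \approx \psi(y) - \tfrac{1}{2m}|k-\ell| \, \psi'(y)$ yields the factor $\sqrt{y(1-y)/\pi}$ appearing in $V(y)$, with $\psi'(y) = \EE[f_{Y_1 \mid \bb{X}_1}(y)/\pi_1(\bb{X}_1)]$. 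I expect the technical care here to involve justifying the normal approximation to $\EE|k-\ell|$ uniformly enough to get $\oo_y(n^{-1} m^{-1/2})$ for the remainder (appealing to known Bernstein-variance expansions such as those in \citealp{MR2960952,MR2925964}) and controlling the Taylor remainder using Assumption~\ref{ass:4} that $f_{Y_1 \mid \bb{X}_1}$ is continuous; the off-diagonal smoothing via the product of two binomial weights is what separates this $m^{-1/2}$ rate from the $m^{-1}$ bias rate.
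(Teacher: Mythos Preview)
Your proposal is correct and follows essentially the same route as the paper: the same iid decomposition $\Var(\widetilde{F}_{n,m}(y)) = n^{-1}\Var(Z_1(y))$, the same reduction of the second moment to the double sum $\sum_{k,\ell}\EE[F_{Y_1\mid\bb{X}_1}((k\wedge\ell)/m)/\pi_1(\bb{X}_1)]\,b_{m,k}(y)b_{m,\ell}(y)$ via $\delta_1^2=\delta_1$ and conditioning, and the same first-order Taylor expansion of $F_{Y_1\mid\bb{X}_1}$ around $y$. The only difference is in how the key sum $\sum_{k,\ell}((k\wedge\ell)/m - y)\,b_{m,k}(y)b_{m,\ell}(y)$ is handled: the paper simply invokes Lemma~4 of \citet{MR4287788} to get $-m^{-1/2}\sqrt{y(1-y)/\pi}+\oo_y(m^{-1/2})$, whereas you sketch a proof of that same fact via $\min(k,\ell)=\tfrac12(k+\ell-|k-\ell|)$ and a normal approximation to $\EE|K-L|$ for independent $\mathrm{Binomial}(m,y)$ variables. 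Your heuristic gives the right constant, and making it rigorous (e.g., via a local CLT or the argument behind the cited lemma) together with the bound $\sum_{k,\ell}((k\wedge\ell)/m - y)^2 b_{m,k}(y)b_{m,\ell}(y)=\OO(m^{-1})$ for the Taylor remainder recovers exactly the paper's expansion.
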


The MSE is an immediate consequence of the bias and variance expansions derived above. Furthermore, analyzing the MSE allows us to determine the optimal polynomial degree $m_{\mathrm{opt}}(y)$ that balances the trade-off between the squared bias term (of order $m^{-2}$) and the variance reduction term (of order $n^{-1}m^{-1/2}$).

\begin{corollary}[Mean squared error]\label{cor:MSE.tilde.F.n.m}
Suppose that Assumptions~\ref{ass:1}, \ref{ass:3}, and \ref{ass:4} hold. Then, for any $y\in (0,1)$, we have, as $n\to \infty$,
\[
\begin{aligned}
\mathrm{MSE}[\widetilde{F}_{n,m}(y)]
&= \Var(\widetilde{F}_{n,m}(y)) + \{\Bias(\widetilde{F}_{n,m})\}^2 \\
&= n^{-1} \sigma^2(y) - n^{-1} m^{-1/2} V(y) + m^{-2} B^2(y) + \oo_y(n^{-1} m^{-1/2}) + \oo_y(m^{-2}).
\end{aligned}
\]
In particular, if $B(y) V(y) \neq 0$, the asymptotically optimal choice of $m$, with respect to the $\mathrm{MSE}$, is
\[
m_{\mathrm{opt}}(y) = n^{2/3} \left[4 B^2(y)/V(y)\right]^{2/3},
\]
in which case, as $n\to \infty$,
\[
\mathrm{MSE}[\widetilde{F}_{m_{\mathrm{opt}},n}(y)] = n^{-1} \sigma^2(y) - n^{-4/3} \left[27 V^4(y)/\{256 B^2(y)\}\right]^{1/3} + \oo_y(n^{-4/3}).
\]
\end{corollary}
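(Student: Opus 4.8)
The plan is to proceed in three steps: assemble the MSE from the two expansions already established, optimize over $m$ by elementary calculus, and substitute the optimizer back to obtain the leading correction.

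First, for the MSE expansion, I would start from the identity $\mathrm{MSE} = \Var + \Bias^2$ and plug in Proposition~\ref{prop:variance.tilde.F.n.m} for the variance together with Proposition~\ref{prop:bias.tilde.F.n.m} for the bias. The variance contributes $n^{-1}\sigma^2(y) - n^{-1}m^{-1/2}V(y) + \oo_y(n^{-1}m^{-1/2})$ verbatim. For the squared bias, I would expand $\{m^{-1}B(y) + \oo(m^{-1})\}^2 = m^{-2}B^2(y) + 2m^{-1}B(y)\,\oo(m^{-1}) + \{\oo(m^{-1})\}^2 = m^{-2}B^2(y) + \oo(m^{-2})$, using that the bias remainder in Proposition~\ref{prop:bias.tilde.F.n.m} is uniform in $y$. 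Summing the two gives the stated four-term expansion.

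Second, to locate $m_{\mathrm{opt}}(y)$, I would treat the two $m$-dependent terms as a function $g(m) = -n^{-1}V(y)m^{-1/2} + B^2(y)m^{-2}$ of a continuous variable $m>0$ (noting that rounding to the nearest integer does not affect the leading asymptotics). Differentiating, $g'(m) = \tfrac{1}{2}n^{-1}V(y)m^{-3/2} - 2B^2(y)m^{-3}$, and setting $g'(m) = 0$ yields $m^{3/2} = 4n B^2(y)/V(y)$, whence $m_{\mathrm{opt}}(y) = n^{2/3}\{4B^2(y)/V(y)\}^{2/3}$. The hypothesis $B(y)V(y)\neq 0$ guarantees this critical point is well-defined and strictly positive; since $g(m)\to +\infty$ as $m\to 0^+$ and $g(m)\to 0^-$ as $m\to \infty$, the unique critical point is a minimum (the variance-reduction term pushes toward small $m$ while the squared bias pushes toward large $m$).

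Finally, I would substitute $m = m_{\mathrm{opt}}(y)$ into $g$. Writing $K = 4nB^2(y)/V(y)$ so that $m_{\mathrm{opt}} = K^{2/3}$, the two contributions combine as $g(m_{\mathrm{opt}}) = K^{-4/3}\{-n^{-1}V(y)K + B^2(y)\} = -3B^2(y)K^{-4/3}$, and unwinding $K$ gives $-n^{-4/3}\{27V^4(y)/(256B^2(y))\}^{1/3}$, which is the claimed correction. The one point requiring genuine care is the remainder bookkeeping: one must verify that both error terms collapse to $\oo_y(n^{-4/3})$ at the optimal scale. Since $m_{\mathrm{opt}}\asymp n^{2/3}$, we have $n^{-1}m_{\mathrm{opt}}^{-1/2}\asymp n^{-4/3}$ and $m_{\mathrm{opt}}^{-2}\asymp n^{-4/3}$, so both $\oo_y(n^{-1}m^{-1/2})$ and $\oo(m^{-2})$ indeed become $\oo_y(n^{-4/3})$, and no larger error is introduced. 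This is the only step where the interplay between the two rates must be checked rather than merely computed.
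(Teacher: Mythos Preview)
Your proposal is correct and matches the paper's intended argument: the corollary is stated as an immediate consequence of Propositions~\ref{prop:bias.tilde.F.n.m} and~\ref{prop:variance.tilde.F.n.m}, with no separate proof given, and your three steps (assemble, optimize by calculus, substitute and track remainders) are exactly the natural way to fill in the details. Your remainder bookkeeping at the optimal scale $m_{\mathrm{opt}}\asymp n^{2/3}$ is the only nontrivial point and you handle it correctly.
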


To obtain a global measure of accuracy, we integrate the MSE over $(0,1)$. This yields the MISE and allows for the determination of a globally optimal degree $m_{\mathrm{opt}}$. This optimal degree achieves a convergence rate of $n^{-4/3}$ for the second-order term, demonstrating a clear improvement over the unsmoothed estimator $\smash{\widetilde{F}_n}$.

\begin{corollary}[Mean integrated squared error]\label{cor:MISE.tilde.F.n.m}
Suppose that Assumptions~\ref{ass:1}, \ref{ass:3}, and \ref{ass:4} hold. Then, as $n\to \infty$, we have
\[
\begin{aligned}
\mathrm{MISE}[\widetilde{F}_{n,m}]
&= \int_0^1 \mathrm{MSE}[\widetilde{F}_{n,m}(y)] \rd y \\
&= n^{-1} \int_0^1 \sigma^2(y) \rd y - n^{-1} m^{-1/2} \int_0^1 V(y) \rd y + m^{-2} \int_0^1 B^2(y) \rd y \\
&\qquad+ \oo(n^{-1} m^{-1/2}) + \oo(m^{-2}).
\end{aligned}
\]
In particular, if $\int_0^1 B^2(y) \rd y \times \int_0^1 V(y) \rd y \neq 0$, the asymptotically optimal choice of $m$, with respect to the $\mathrm{MISE}$, is
\[
m_{\mathrm{opt}} = n^{2/3} \left[\frac{4 \int_0^1 B^2(y) \rd y}{\int_0^1 V(y) \rd y}\right]^{2/3},
\]
in which case, as $n\to \infty$,
\[
\mathrm{MISE}[\widetilde{F}_{m_{\mathrm{opt}},n}] = n^{-1} \int_0^1 \sigma^2(y) \rd y - n^{-4/3} \left[\frac{27 \, \{\int_0^1 V(y) \rd y\}^4}{256 \int_0^1 B^2(y) \rd y}\right]^{1/3} + \oo(n^{-4/3}).
\]
\end{corollary}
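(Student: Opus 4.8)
The plan is to obtain the expansion by integrating the pointwise MSE expansion of Corollary~\ref{cor:MSE.tilde.F.n.m} over $[0,1]$, and then to carry out an elementary one-variable optimization in $m$. First I would check that each of the three leading contributions integrates to a finite quantity. Under Assumptions~\ref{ass:1}, \ref{ass:3}, and~\ref{ass:4}, the functions $\sigma^2$, $V$, and $B^2$ are bounded and continuous on $[0,1]$ (in fact $V(y)$ and $B^2(y)$ both vanish at the endpoints through the factors $\sqrt{y(1-y)}$ and $y(1-y)$, respectively), so $\int_0^1 \sigma^2(y)\rd y$, $\int_0^1 V(y)\rd y$, and $\int_0^1 B^2(y)\rd y$ are all finite. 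This justifies splitting $\int_0^1 [n^{-1}\sigma^2(y) - n^{-1}m^{-1/2}V(y) + m^{-2}B^2(y)]\rd y$ into the three stated leading terms.

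Next I would handle the two remainder terms separately, since they carry different uniformity properties. The squared-bias remainder is the benign one: Proposition~\ref{prop:bias.tilde.F.n.m} gives $\Bias(\widetilde{F}_{n,m}) = m^{-1}B(y) + \oo(m^{-1})$ \emph{uniformly} in $y$, so, $B$ being bounded, squaring yields $\{\Bias(\widetilde{F}_{n,m})\}^2 = m^{-2}B^2(y) + \oo(m^{-2})$ with the $\oo(m^{-2})$ uniform in $y$; integrating a uniform $\oo(m^{-2})$ over $[0,1]$ preserves the rate and contributes $\oo(m^{-2})$.

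The main obstacle is the variance remainder, because Proposition~\ref{prop:variance.tilde.F.n.m} only furnishes $\Var(\widetilde{F}_{n,m}(y)) = n^{-1}\sigma^2(y) - n^{-1}m^{-1/2}V(y) + \oo_y(n^{-1}m^{-1/2})$ \emph{pointwise}, with a rate that may degrade as $y\to 0$ or $y\to 1$. To upgrade $\int_0^1 \oo_y(n^{-1}m^{-1/2})\rd y$ to a genuine $\oo(n^{-1}m^{-1/2})$, I would set $R_n(y) = n m^{1/2}\bigl[\Var(\widetilde{F}_{n,m}(y)) - n^{-1}\sigma^2(y) + n^{-1}m^{-1/2}V(y)\bigr]$, which satisfies $R_n(y)\to 0$ for each fixed $y\in(0,1)$, and then invoke the dominated convergence theorem. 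The task then reduces to producing an $n$-independent, integrable envelope $g(y)\ge \sup_{n}|R_n(y)|$. I expect this to follow by re-examining the proof of Proposition~\ref{prop:variance.tilde.F.n.m} and retaining the explicit constants in the binomial-smoothing estimates: these typically control the relevant quantities by expressions that are uniformly bounded on $[0,1]$ together with inverse powers of $y(1-y)$, so an envelope of the form $g(y) = C\bigl(1 + [y(1-y)]^{-\alpha}\bigr)$ for some $\alpha\in(0,1)$, which is integrable on $[0,1]$, should suffice. (Alternatively, one can split $[0,1]$ into a central region, where the convergence is uniform, and shrinking boundary strips whose contribution is shown to be $\oo(n^{-1}m^{-1/2})$.) Applying dominated convergence then gives $\int_0^1 R_n(y)\rd y \to 0$, and combining the three pieces yields the stated MISE expansion.

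Finally, the determination of $m_{\mathrm{opt}}$ is routine calculus, structurally identical to the pointwise optimization in Corollary~\ref{cor:MSE.tilde.F.n.m} with $B^2(y)$ and $V(y)$ replaced by their integrals. Writing $I_B = \int_0^1 B^2(y)\rd y$ and $I_V = \int_0^1 V(y)\rd y$, and assuming the nondegeneracy condition $I_B\, I_V \neq 0$, I would minimize $h(m) = -n^{-1}m^{-1/2}I_V + m^{-2}I_B$ by solving $h'(m) = \tfrac12 n^{-1}m^{-3/2}I_V - 2 m^{-3}I_B = 0$, which gives $m^{3/2} = 4 n I_B/I_V$ and hence $m_{\mathrm{opt}} = n^{2/3}\bigl(4 I_B/I_V\bigr)^{2/3}$. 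Substituting this value back into the expansion and collecting the $n^{-4/3}$ terms produces the optimized second-order constant $\bigl[27\,I_V^4/(256\,I_B)\bigr]^{1/3}$; since $m_{\mathrm{opt}}\to\infty$ as $n\to\infty$, consistent with the standing assumption on $m$, the remainder remains $\oo(n^{-4/3})$, completing the argument.
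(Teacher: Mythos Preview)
Your proposal is correct and follows the same route as the paper: integrate the pointwise MSE expansion of Corollary~\ref{cor:MSE.tilde.F.n.m} over $[0,1]$ and then optimize in $m$. In fact the paper provides no separate proof for this corollary, treating it as an immediate consequence of Corollary~\ref{cor:MSE.tilde.F.n.m}; your explicit discussion of the integrability of the $\oo_y(n^{-1}m^{-1/2})$ variance remainder via dominated convergence (or a boundary-splitting argument) is therefore more careful than the paper itself, which simply asserts the integrated expansion.
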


Next, we establish the asymptotic normality of the pseudo estimator, which is essential for constructing pointwise confidence intervals. The proof is a straightforward verification of the Lindeberg condition for double arrays. Developing uniform confidence bands for the entire curve, however, falls outside the scope of this paper and would typically require bootstrap or multiplier methods.

\begin{theorem}[Asymptotic normality]\label{thm:tilde.F.n.m.asymp.normality}
Suppose that Assumptions~\ref{ass:1}, \ref{ass:3}, and \ref{ass:4} hold, and assume that $\sigma^2(y) > 0$ in \eqref{eq:def.sigma2.V}. Then, for any $y\in (0,1)$ and $\lambda\in (0,\infty)$, we have, as $n\to \infty$,
\[
n^{1/2} \{\widetilde{F}_{n,m}(y) - F(y)\} \stackrel{d}{\longrightarrow}
\begin{cases}
\mathcal{N}(0,\sigma^2(y)), &\mbox{if } n^{1/2} m^{-1} \to 0, \\
\mathcal{N}(\lambda B(y),\sigma^2(y)), &\mbox{if } n^{1/2} m^{-1} \to \lambda,
\end{cases}
\]
where $\stackrel{d}{\longrightarrow}$ denotes the convergence in distribution (pointwise in $y$).
\end{theorem}

\subsection{Asymptotic properties of \texorpdfstring{$\smash{\widehat{F}_{n,m}}$}{hat{F}\_n,m}}\label{sec:asymp.hat.F.n.m}

We now turn to the feasible estimator $\smash{\widehat{F}_{n,m}}$, which uses estimated propensity scores. Throughout this subsection, we work under the discrete finite-support assumption on $\bb{X}$ from Section~\ref{sec:setup}. We analyze its properties by characterizing its relationship with the pseudo estimator $\smash{\widetilde{F}_{n,m}}$. We first show that the process of estimating the propensity scores introduces only a negligible amount of additional bias of order $n^{-1}$, which can be positive or negative.

\begin{proposition}[Bias]\label{prop:bias.hat.F.n.m}
Suppose that Assumptions~\ref{ass:1}--\ref{ass:3} hold. Then, for any $y\in (0,1)$, we have, as $n\to \infty$,
\[
\begin{aligned}
\Bias(\widehat{F}_{n,m}(y))
= \EE[\widehat{F}_{n,m}(y)] - F(y)
&= \Bias(\widetilde{F}_{n,m}(y)) + \OO(n^{-1}) \\
&= m^{-1} B(y) + \oo_y(m^{-1}) + \OO(n^{-1}),
\end{aligned}
\]
where $B$ is defined in \eqref{eq:def.B}.
\end{proposition}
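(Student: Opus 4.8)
The plan is to bound the gap between the two smoothed estimators in expectation and then invoke Proposition~\ref{prop:bias.tilde.F.n.m}. From the definitions in \eqref{eq:estimators},
\[
\EE[\widehat{F}_{n,m}(y)] - \EE[\widetilde{F}_{n,m}(y)] = n^{-1}\sum_{k=0}^m b_{m,k}(y)\sum_{i=1}^n \EE\!\left[\delta_i\Big(\tfrac{1}{\hat{\pi}_i(\bb{X}_{1:n})} - \tfrac{1}{\pi_i(\bb{X}_i)}\Big)\ind_{\{Y_i\le k/m\}}\right],
\]
so the whole proposition reduces to showing this difference is $\OO(n^{-1})$ uniformly in $y$. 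The first move is to exploit Assumption~\ref{ass:2}: since $\bb{X}$ takes finitely many values, I group the $n$ units by the value $\bb{x}$ of their covariate. For each such $\bb{x}$, set $N(\bb{x}) = \sum_{j}\ind_{\{\bb{X}_j=\bb{x}\}}$ and $N_1(\bb{x}) = \sum_j \delta_j\ind_{\{\bb{X}_j=\bb{x}\}}$, so that $\hat{\pi}_i(\bb{X}_{1:n}) = N_1(\bb{X}_i)/N(\bb{X}_i)$ depends on $i$ only through $\bb{X}_i$, and write $\pi(\bb{x}) = \PP(\delta_1=1\mid\bb{X}_1=\bb{x})$ for the common propensity.

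The crux is a self-normalization identity for the estimated weights: on the event $\{N_1(\bb{x})\ge 1\}$ one has $\sum_{i:\,\bb{X}_i=\bb{x}}\delta_i/\hat{\pi}_i = N(\bb{x})$ exactly, while on $\{N_1(\bb{x})=0\}$ every term with $\bb{X}_i=\bb{x}$ vanishes. Conditioning on $(\bb{X}_{1:n},\delta_{1:n})$ and using the MAR property (so that, given $\bb{X}_i=\bb{x}$, the variable $Y_i$ keeps its conditional law $F_{Y_1\mid\bb{X}_1}(\cdot\mid\bb{x})$ irrespective of $\delta_i$), the conditional expectation of $\sum_{i:\bb{X}_i=\bb{x}}\delta_i\ind_{\{Y_i\le k/m\}}$ equals $N_1(\bb{x})\,F_{Y_1\mid\bb{X}_1}(k/m\mid\bb{x})$. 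Averaging next over $\delta_{1:n}$ given $\bb{X}_{1:n}$, the pseudo contribution becomes $N(\bb{x})F_{Y_1\mid\bb{X}_1}(k/m\mid\bb{x})$ (the mean $N(\bb{x})\pi(\bb{x})$ of $N_1(\bb{x})$ cancels the weight $1/\pi(\bb{x})$), whereas the feasible contribution is the same quantity multiplied by the binomial survival factor $\PP(N_1(\bb{x})\ge1\mid\bb{X}_{1:n}) = 1-(1-\pi(\bb{x}))^{N(\bb{x})}$. Subtracting, the contribution of each $(\bb{x},k)$ is exactly $-N(\bb{x})\,(1-\pi(\bb{x}))^{N(\bb{x})}\,F_{Y_1\mid\bb{X}_1}(k/m\mid\bb{x})$.

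Summing over $k$ (where $\sum_k b_{m,k}(y)F_{Y_1\mid\bb{X}_1}(k/m\mid\bb{x}) = \mathcal{B}_m(F_{Y_1\mid\bb{X}_1}(\cdot\mid\bb{x}))(y)\in[0,1]$) and over the finitely many $\bb{x}$, and taking the outer expectation, gives
\[
\big|\EE[\widehat{F}_{n,m}(y)] - \EE[\widetilde{F}_{n,m}(y)]\big| \le n^{-1}\,\EE\Big[\textstyle\sum_{\bb{x}} N(\bb{x})\,(1-\pi(\bb{x}))^{N(\bb{x})}\Big].
\]
Using $1-\pi(\bb{x})\le 1-\pi_{\min}<1$ from Assumption~\ref{ass:1}, the elementary bound $\sup_{t\ge 0} t\,(1-\pi_{\min})^t = \OO(1)$, and the finiteness of the support from Assumption~\ref{ass:2}, the bracketed sum is bounded by a constant. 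Hence the gap is $\OO(n^{-1})$ uniformly in $y$, and adding $m^{-1}B(y)+\oo(m^{-1})$ from Proposition~\ref{prop:bias.tilde.F.n.m} yields the claim; note that Assumption~\ref{ass:4} is not needed since $F_{Y_1\mid\bb{X}_1}$ enters only through the trivial bound $0\le F_{Y_1\mid\bb{X}_1}\le 1$.

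The main obstacle is the exact evaluation of the conditional expectation of the feasible IPW weights, because $\hat{\pi}_i$ contains $\delta_i$ itself and is undefined on the degenerate event $\{N_1(\bb{x})=0\}$. The self-normalization identity $\sum_{i:\bb{X}_i=\bb{x}}\delta_i/\hat{\pi}_i = N(\bb{x})\,\ind_{\{N_1(\bb{x})\ge1\}}$ is what resolves both difficulties at once: it removes the self-reference and shows that the degenerate event contributes nothing, reducing the whole discrepancy to the factor $(1-\pi(\bb{x}))^{N(\bb{x})}$, which is in fact exponentially small in $N(\bb{x})$ and thus comfortably within the stated $\OO(n^{-1})$ rate.
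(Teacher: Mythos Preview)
Your proof is correct and takes a genuinely different route from the paper. The paper proceeds by Taylor-expanding $1/\hat{\pi}_i(\bb{X}_{1:n})$ around $1/\pi_i(\bb{X}_i)$ (and then $1/\hat{p}(\bb{X}_i)$ around $1/p(\bb{X}_i)$), producing a hierarchy of correction terms that are shown to have expectation $\OO(n^{-1})$ via moment bounds and conditional-independence arguments; this is more laborious but has the side benefit of setting up the $L^2$ machinery reused in the variance proof of Proposition~\ref{prop:var.hat.F.n.m}. Your argument instead exploits the exact self-normalization identity $\sum_{i:\bb{X}_i=\bb{x}}\delta_i/\hat{\pi}_i = N(\bb{x})\,\ind_{\{N_1(\bb{x})\ge 1\}}$, which immediately yields an \emph{exact} expression for the bias gap, namely $-n^{-1}\EE\big[\sum_{\bb{x}} N(\bb{x})(1-\pi(\bb{x}))^{N(\bb{x})}\,\mathcal{B}_m(F_{Y_1\mid\bb{X}_1}(\cdot\mid\bb{x}))(y)\big]$, and shows that the entire discrepancy stems from the event that a covariate cell contains no observed response. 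This is more transparent and in fact gives a sharper bound (exponentially small in the cell counts rather than merely $\OO(n^{-1})$), though it is tailored to the bias calculation and would not by itself furnish the $\OO_{L^2}$ expansion the paper later needs for the variance.
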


A key finding is that the feasible estimator exhibits a notable variance reduction compared to the pseudo estimator. Proposition~\ref{prop:var.hat.F.n.m} reveals that $\smash{\widehat{F}_{n,m}}$ has a smaller asymptotic variance than $\smash{\widetilde{F}_{n,m}}$ by an explicit nonnegative correction term $n^{-1} C(y)$. This highlights a beneficial interaction between smoothing and nonparametric propensity estimation in this context. Let us remark that \citet[Eq.~(8)]{MR2569798} obtained a similar variance reduction in the KDE setting.

\begin{proposition}[Variance]\label{prop:var.hat.F.n.m}
Suppose that Assumptions~\ref{ass:1}--\ref{ass:4} hold, and define
\begin{equation}\label{eq:C.nu}
\begin{aligned}
C(y)
&= \EE\left[\frac{(1 - \pi(\bb{X}_1))}{\pi(\bb{X}_1)} \{F_{Y_1 \mid \bb{X}_1}(y)\}^2\right], \\
\nu^2(y)
&= \sigma^2(y) - C(y) \\
&= \EE\left[\frac{F_{Y_1 \mid \bb{X}_1}(y) (1 - F_{Y_1 \mid \bb{X}_1}(y))}{\pi(\bb{X}_1)}\right] + \EE\left[\{F_{Y_1 \mid \bb{X}_1}(y)\}^2\right] - \{F(y)\}^2.
\end{aligned}
\end{equation}
Then, for any $y\in (0,1)$, we have, as $n\to \infty$,
\[
\begin{aligned}
\Var(\widehat{F}_{n,m}(y))
&= \Var(\widetilde{F}_{n,m}(y)) - n^{-1} C(y) + \OO_y(n^{-1} m^{-1}) \\
&= n^{-1} \nu^2(y) - n^{-1} m^{-1/2} V(y) + \oo_y(n^{-1} m^{-1/2}),
\end{aligned}
\]
where $V(y)$ is defined in \eqref{eq:def.sigma2.V}.
\end{proposition}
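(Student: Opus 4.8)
The plan is to exploit the discreteness of $\bb{X}$ (Assumption~\ref{ass:2}) to recast $\widehat{F}_{n,m}$ as a post-stratified average and then compare its variance to that of $\smash{\widetilde{F}_{n,m}}$ via the law of total variance. Write $\psi_{m,y}(t) = \sum_{k=0}^m \ind_{\{t\le k/m\}}\,b_{m,k}(y)$ for the Bernstein-smoothed indicator, so that $\widehat{F}_{n,m}(y) = n^{-1}\sum_{i=1}^n \delta_i\,\hat\pi_i^{-1}\,\psi_{m,y}(Y_i)$. For each $\bb{x}$ in the finite support of $\bb{X}$, put $N(\bb{x}) = \sum_i \ind_{\{\bb{X}_i=\bb{x}\}}$ and $M(\bb{x}) = \sum_i \delta_i\,\ind_{\{\bb{X}_i=\bb{x}\}}$, so that $\hat\pi_i = M(\bb{X}_i)/N(\bb{X}_i)$ by \eqref{eq:hat.pi}. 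Grouping by strata and cancelling $N(\bb{x})$ yields the identity
\[
\widehat{F}_{n,m}(y) = \sum_{\bb{x}} \frac{N(\bb{x})}{n}\,\bar\psi_{\bb{x}}(y), \qquad \bar\psi_{\bb{x}}(y) = \frac{1}{M(\bb{x})}\sum_{i:\bb{X}_i=\bb{x}}\delta_i\,\psi_{m,y}(Y_i),
\]
where empty strata ($M(\bb{x})=0$) contribute zero and no division by zero occurs, since $\delta_i=1$ forces $M(\bb{X}_i)\ge 1$.

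Next I would condition on $\mathcal{G} = \sigma(\bb{X}_{1:n},\delta_{1:n})$. By the MAR property $\delta_i\perp Y_i\mid\bb{X}_i$, the observed responses within a stratum $\bb{x}$ are, given $\mathcal{G}$, iid with law $F_{Y_1\mid\bb{X}_1=\bb{x}}$. Writing $\mu_{\bb{x}}(y) = \mathcal{B}_m(F_{Y_1\mid\bb{X}_1=\bb{x}})(y)$ and $\tau_{\bb{x}}^2(y) = \Var(\psi_{m,y}(Y_1)\mid\bb{X}_1=\bb{x})$, conditional independence across strata gives, up to an $\OO(e^{-cn})$ contribution from empty strata,
\[
\EE[\widehat{F}_{n,m}(y)\mid\mathcal{G}] = \sum_{\bb{x}}\frac{N(\bb{x})}{n}\mu_{\bb{x}}(y), \qquad \Var(\widehat{F}_{n,m}(y)\mid\mathcal{G}) = \sum_{\bb{x}}\frac{N(\bb{x})^2}{n^2}\frac{\tau_{\bb{x}}^2(y)}{M(\bb{x})}.
\]
The crucial point is that the conditional mean does not involve $M(\bb{x})$; this is the source of the variance reduction. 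Since $\sum_{\bb{x}}N(\bb{x})\mu_{\bb{x}}(y) = \sum_i \mu_{\bb{X}_i}(y)$ is a sum of iid terms, the law of total variance produces a between-strata term $n^{-1}\Var(\mu_{\bb{X}_1}(y))$ and a within-strata term that I will show equals $n^{-1}\EE[\tau_{\bb{X}_1}^2(y)/\pi_1(\bb{X}_1)]$ up to negligible error.

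To isolate the correction $C(y)$, I would expand the pseudo variance from its iid representation $\smash{\widetilde{F}_{n,m}}(y) = n^{-1}\sum_i \delta_i\,\pi_1(\bb{X}_i)^{-1}\psi_{m,y}(Y_i)$. Conditioning each summand on $(\bb{X}_1,Y_1)$ and using $\Var(\delta_1\mid\bb{X}_1,Y_1)=\pi_1(\bb{X}_1)(1-\pi_1(\bb{X}_1))$ together with $\Var(\psi_{m,y}(Y_1)) = \EE[\tau_{\bb{X}_1}^2(y)] + \Var(\mu_{\bb{X}_1}(y))$ gives
\[
n\,\Var(\widetilde{F}_{n,m}(y)) = \EE\!\left[\frac{1-\pi_1(\bb{X}_1)}{\pi_1(\bb{X}_1)}\psi_{m,y}(Y_1)^2\right] + \EE[\tau_{\bb{X}_1}^2(y)] + \Var(\mu_{\bb{X}_1}(y)).
\]
Subtracting the two variances, the $\Var(\mu_{\bb{X}_1})$ terms cancel and, using $\EE[\psi_{m,y}(Y_1)^2\mid\bb{X}_1] - \tau_{\bb{X}_1}^2(y) = \mu_{\bb{X}_1}(y)^2$, the difference collapses to $\EE[\tfrac{1-\pi_1(\bb{X}_1)}{\pi_1(\bb{X}_1)}\mu_{\bb{X}_1}(y)^2]$. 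The Bernstein estimate $\mu_{\bb{x}}(y) = F_{Y_1\mid\bb{X}_1=\bb{x}}(y) + \OO(m^{-1})$ (Assumption~\ref{ass:4}) turns this into $C(y) + \OO(m^{-1})$, giving the first line of the statement; the second line then follows from Proposition~\ref{prop:variance.tilde.F.n.m} and $\nu^2(y)=\sigma^2(y)-C(y)$. Notably, the smoothing term $V(y)$ need never be expanded on its own: it is carried identically by $\tau_{\bb{X}_1}^2$ on both sides and cancels in the difference, leaving only the $\OO(m^{-1})$ gap between $\mu_{\bb{x}}^2$ and $\{F_{Y_1\mid\bb{X}_1=\bb{x}}\}^2$.

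The main obstacle is the rigorous treatment of the within-strata term, namely showing $\EE[N(\bb{x})^2\,M(\bb{x})^{-1}\ind_{\{M(\bb{x})\ge 1\}}] = n\,p(\bb{x})/\pi_1(\bb{x}) + \OO(1)$, an inverse-moment estimate for the conditionally binomial count $M(\bb{x})\mid N(\bb{x})\sim\mathrm{Bin}(N(\bb{x}),\pi_1(\bb{x}))$ with random sample size $N(\bb{x})$. Here Assumptions~\ref{ass:1}--\ref{ass:2} do the heavy lifting: because $p_{\min}>0$ and $\pi_{\min}>0$, both $N(\bb{x})$ and $M(\bb{x})$ concentrate around $np(\bb{x})$ and $np(\bb{x})\pi_1(\bb{x})$ with exponentially small deviation probabilities, the event $\{M(\bb{x})=0\}$ has probability $\OO(e^{-cn})$ and so is negligible in both conditional moments, and the inverse binomial moment satisfies $\EE[M(\bb{x})^{-1}\mid N(\bb{x}),M(\bb{x})\ge 1] = (N(\bb{x})\pi_1(\bb{x}))^{-1}(1+\oo(1))$. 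Assembling these bounds reduces the within-strata term to $n^{-1}\EE[\tau_{\bb{X}_1}^2(y)/\pi_1(\bb{X}_1)]$ with an $\OO(n^{-2})$ error that is absorbed into the stated remainder.
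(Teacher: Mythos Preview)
Your argument is correct and reaches the same conclusion, but by a genuinely different route than the paper. The paper linearizes $\widehat{F}_{n,m}$ via a Taylor expansion of $1/\hat\pi_i$ around $1/\pi_i(\bb{X}_i)$, obtaining the influence-function representation
\[
\widehat{F}_{n,m}(y)=\widetilde{F}_{n,m}(y)-B_{n,m}+\OO_{L^2}(n^{-1}),\qquad
B_{n,m}=n^{-1}\sum_{i=1}^n\frac{\delta_i-\pi_i(\bb{X}_i)}{\pi_i(\bb{X}_i)}\sum_{k=0}^m F_{Y_i\mid\bb{X}_i}(k/m)\,b_{m,k}(y),
\]
and then shows by direct calculation that $\Cov(\widetilde{F}_{n,m},B_{n,m})=\Var(B_{n,m})$, so that $\Var(\widehat{F}_{n,m})=\Var(\widetilde{F}_{n,m})-\Var(B_{n,m})$; a Taylor expansion of $F_{Y_1\mid\bb{X}_1}(k/m)$ then identifies $\Var(B_{n,m})=n^{-1}C(y)+\OO(n^{-1}m^{-1})$. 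Your post-stratification decomposition plus the law of total variance conditional on $\sigma(\bb{X}_{1:n},\delta_{1:n})$ is more elementary and makes the variance-reduction mechanism transparent (the conditional mean does not see $M(\bb{x})$), and your cancellation of $\tau_{\bb{X}_1}^2$ on both sides to isolate $\EE[(1-\pi_1)\pi_1^{-1}\mu_{\bb{X}_1}^2]$ is clean. The trade-off is that the paper's linearization yields, for free, the iid summand representation $W_{i,m}=Z_{i,m}-U_{i,m}$ that is reused verbatim in the proof of the asymptotic normality (Theorem~\ref{thm:hat.F.n.m.asymp.normality}); your route would require a separate argument there. Two minor remarks: the inverse-moment step produces an $\OO(n^{-2})$ remainder that, like the paper's $\OO_{L^2}(n^{-1})$ linearization error, has to be absorbed into the stated remainders (harmless under the regimes of interest); and the rate $\mu_{\bb{x}}(y)=F_{Y_1\mid\bb{X}_1=\bb{x}}(y)+\OO(m^{-1})$ uses a second-order Taylor remainder just as the paper does, slightly beyond the letter of Assumption~\ref{ass:4}.
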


\begin{remark}\label{rem:var.reduction.continuous}
The variance reduction in Proposition~\ref{prop:var.hat.F.n.m} does not rely on the auxiliary variable $\bb{X}$ being discrete with finite support. As mentioned in Section~\ref{sec:setup}, the restriction to the discrete case was adopted mainly to avoid the tedious task of treating the discrete and continuous covariate settings in parallel. The same phenomenon was proved by \citet{MR2569798} for the IPW KDE with continuous covariates. In the present CDF setting with continuous covariates, the same asymptotic results hold if the propensity scores are estimated by a Nadaraya--Watson estimator.
\end{remark}

Combining the bias and the reduced variance, we obtain the MSE for the feasible estimator. Since the leading bias term and the variance reduction term due to smoothing remain the same as for the pseudo estimator, the asymptotically optimal degree $m_{\mathrm{opt}}(y)$ is unchanged. However, the resulting MSE is smaller due to the lower asymptotic variance: typically $\nu^2(y) < \sigma^2(y)$.

\begin{corollary}[Mean squared error]\label{cor:MSE.hat.F.n.m}
Suppose that Assumptions~\ref{ass:1}--\ref{ass:4} hold. Then, for any $y\in (0,1)$, we have, as $n\to \infty$,
\[
\begin{aligned}
\mathrm{MSE}[\widehat{F}_{n,m}(y)]
&= \Var(\widehat{F}_{n,m}(y)) + \{\Bias(\widehat{F}_{n,m}(y))\}^2 \\
&= \mathrm{MSE}[\widetilde{F}_{n,m}(y)] - n^{-1} C(y) + \OO_y(n^{-1} m^{-1}) + \OO(n^{-2}) \\
&= n^{-1} \nu^2(y) - n^{-1} m^{-1/2} V(y) + m^{-2} B^2(y) + \oo_y(n^{-1} m^{-1/2}) + \oo_y(m^{-2}) + \OO(n^{-2}).
\end{aligned}
\]
In particular, if $B(y) V(y) \neq 0$, the asymptotically optimal choice of $m$, with respect to the $\mathrm{MSE}$, is
\[
m_{\mathrm{opt}}(y) = n^{2/3} \left[4 B^2(y)/V(y)\right]^{2/3},
\]
in which case, as $n\to \infty$,
\[
\mathrm{MSE}[\widehat{F}_{m_{\mathrm{opt}},n}(y)] = n^{-1} \nu^2(y) - n^{-4/3} \left[27 \, V^4(y)/\{256 B^2(y)\}\right]^{1/3} + \oo_y(n^{-4/3}).
\]
\end{corollary}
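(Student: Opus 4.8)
The plan is to assemble the two-term MSE directly from the already-established bias and variance expansions, since no new probabilistic input is required. First I would start from the decomposition $\mathrm{MSE}[\widehat{F}_{n,m}(y)] = \Var(\widehat{F}_{n,m}(y)) + \{\Bias(\widehat{F}_{n,m}(y))\}^2$ and insert the variance expansion from Proposition~\ref{prop:var.hat.F.n.m}, which already supplies both $n^{-1}\nu^2(y) - n^{-1}m^{-1/2}V(y) + \oo_y(n^{-1}m^{-1/2})$ and the comparison $\Var(\widehat{F}_{n,m}(y)) = \Var(\widetilde{F}_{n,m}(y)) - n^{-1}C(y) + \OO(n^{-1}m^{-1})$.

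Next I would square the bias. Writing $\Bias(\widehat{F}_{n,m}(y)) = m^{-1}B(y) + \oo(m^{-1}) + \OO(n^{-1})$ from Proposition~\ref{prop:bias.hat.F.n.m} and expanding the square, the leading term is $m^{-2}B^2(y)$, the terms involving the $\oo(m^{-1})$ remainder contribute $\oo(m^{-2})$, the cross term between $m^{-1}B(y)$ and the $\OO(n^{-1})$ remainder is $\OO(n^{-1}m^{-1})$, and the square of the $\OO(n^{-1})$ remainder is $\OO(n^{-2})$. Thus $\{\Bias(\widehat{F}_{n,m}(y))\}^2 = m^{-2}B^2(y) + \oo(m^{-2}) + \OO(n^{-1}m^{-1}) + \OO(n^{-2})$. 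Combining with the variance expansion, and absorbing $\OO(n^{-1}m^{-1})$ into $\oo_y(n^{-1}m^{-1/2})$ (valid because $m\to\infty$), yields the three-term MSE with remainder $\oo_y(n^{-1}m^{-1/2}) + \oo(m^{-2}) + \OO(n^{-2})$. The intermediate identity relating $\mathrm{MSE}[\widehat{F}_{n,m}(y)]$ to $\mathrm{MSE}[\widetilde{F}_{n,m}(y)] - n^{-1}C(y)$ follows by the same bookkeeping together with Corollary~\ref{cor:MSE.tilde.F.n.m} and the definition $\nu^2(y) = \sigma^2(y) - C(y)$.

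For the optimal degree, observe that the only $m$-dependent terms are $-n^{-1}m^{-1/2}V(y) + m^{-2}B^2(y)$, which are identical to those in Corollary~\ref{cor:MSE.tilde.F.n.m}; the $n^{-1}\nu^2(y)$ and $\OO(n^{-2})$ contributions are free of $m$. Treating $m$ as continuous and setting the derivative of $g(m) = -n^{-1}m^{-1/2}V(y) + m^{-2}B^2(y)$ to zero gives $\tfrac{1}{2}n^{-1}m^{-3/2}V(y) = 2m^{-3}B^2(y)$, hence $m^{3/2} = 4nB^2(y)/V(y)$ and $m_{\mathrm{opt}}(y) = n^{2/3}[4B^2(y)/V(y)]^{2/3}$, exactly as in the pseudo case. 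Substituting back gives $g(m_{\mathrm{opt}}(y)) = -n^{-4/3}[27V^4(y)/\{256B^2(y)\}]^{1/3}$ after elementary simplification, so the optimized MSE equals $n^{-1}\nu^2(y)$ minus that second-order term, up to remainder.

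The computation is essentially mechanical, so the only point that needs genuine care is the treatment of the remainder terms at $m = m_{\mathrm{opt}}(y) \asymp n^{2/3}$. There one checks that $n^{-1}m^{-1/2} \asymp n^{-4/3}$ and $m^{-2} \asymp n^{-4/3}$, so the $\oo_y(n^{-1}m^{-1/2})$ and $\oo(m^{-2})$ terms both collapse into $\oo_y(n^{-4/3})$, while the $m$-independent $\OO(n^{-2})$ term is negligible relative to $n^{-4/3}$ since $n^{-2}/n^{-4/3} = n^{-2/3}\to 0$, and is therefore absorbed into the final $\oo_y(n^{-4/3})$. This is the one place where the new $\OO(n^{-2})$ bias contribution, absent for the pseudo estimator, must be shown not to disturb the second-order rate.
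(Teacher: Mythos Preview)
Your proposal is correct and follows precisely the approach the paper intends: the corollary is stated without a separate proof because it is an immediate consequence of combining Propositions~\ref{prop:bias.hat.F.n.m} and~\ref{prop:var.hat.F.n.m} with the calculus optimization already carried out in Corollary~\ref{cor:MSE.tilde.F.n.m}. Your careful bookkeeping of the remainder terms, in particular the observation that the extra $\OO(n^{-2})$ contribution from squaring the bias is harmless at $m_{\mathrm{opt}}\asymp n^{2/3}$, is exactly the one point that needs checking and you handle it correctly.
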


The global performance, measured by MISE, also improves for the feasible estimator. The optimal global degree $m_{\mathrm{opt}}$ remains consistent with the pseudo estimator, confirming the overall superiority of the feasible estimator in this setting.

\begin{corollary}[Mean integrated squared error]\label{cor:MISE.hat.F.n.m}
Suppose that Assumptions~\ref{ass:1}--\ref{ass:4} hold. Then, as $n\to \infty$, we have
\[
\begin{aligned}
\mathrm{MISE}[\widehat{F}_{n,m}]
&= \int_0^1 \mathrm{MSE}[\widehat{F}_{n,m}(y)] \rd y \\
&= \mathrm{MISE}[\widetilde{F}_{n,m}] - n^{-1} \int_0^1 C(y) \rd y + \OO(n^{-1} m^{-1}) + \OO(n^{-2}) \\
&= n^{-1} \int_0^1 \nu^2(y) \rd y - n^{-1} m^{-1/2} \int_0^1 V(y) \rd y + m^{-2} \int_0^1 B^2(y) \rd y \\
&\qquad+ \oo(n^{-1} m^{-1/2}) + \oo(m^{-2}) + \OO(n^{-2}).
\end{aligned}
\]
In particular, if $\int_0^1 B^2(y) \rd y \times \int_0^1 V(y) \rd y \neq 0$, the asymptotically optimal choice of $m$, with respect to the $\mathrm{MISE}$, is
\[
m_{\mathrm{opt}} = n^{2/3} \left[\frac{4 \int_0^1 B^2(y) \rd y}{\int_0^1 V(y) \rd y}\right]^{2/3},
\]
in which case, as $n\to \infty$,
\[
\mathrm{MISE}[\widehat{F}_{m_{\mathrm{opt}},n}] = n^{-1} \int_0^1 \nu^2(y) \rd y - n^{-4/3} \left[\frac{27 \, \{\int_0^1 V(y) \rd y\}^4}{256 \int_0^1 B^2(y) \rd y}\right]^{1/3} + \oo(n^{-4/3}).
\]
\end{corollary}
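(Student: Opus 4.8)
The plan is to obtain the MISE expansion by integrating the pointwise MSE expansion of Corollary~\ref{cor:MSE.hat.F.n.m} over $[0,1]$, anchoring the argument on the already-established pseudo-estimator MISE so that the most delicate integration is inherited rather than redone, and then to pin down $m_{\mathrm{opt}}$ by a one-dimensional calculus optimization. First I would integrate the pointwise identity
\[
\mathrm{MSE}[\widehat{F}_{n,m}(y)] = \mathrm{MSE}[\widetilde{F}_{n,m}(y)] - n^{-1} C(y) + \OO(n^{-1} m^{-1}) + \OO(n^{-2})
\]
from Corollary~\ref{cor:MSE.hat.F.n.m} term by term. The leading difference integrates to $-n^{-1}\int_0^1 C(y)\rd y$, a finite quantity since $C$ is bounded on $[0,1]$ under Assumptions~\ref{ass:1}--\ref{ass:2} (the conditional CDFs lie in $[0,1]$ and $\pi_1(\bb{X}_1)\geq \pi_{\min}$). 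Substituting the pseudo MISE from Corollary~\ref{cor:MISE.tilde.F.n.m} and merging the two bounded integrals through the identity $\nu^2(y) = \sigma^2(y) - C(y)$ of \eqref{eq:C.nu}, i.e. $\int_0^1 \sigma^2 \rd y - \int_0^1 C \rd y = \int_0^1 \nu^2 \rd y$, then yields both displayed expansions.

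The hard part will be the passage from pointwise remainders to an integrated remainder of the same order. Routing the proof through Corollary~\ref{cor:MISE.tilde.F.n.m} already absorbs the integration of the $\oo_y(n^{-1}m^{-1/2})$ term at the level of the pseudo estimator, so what remains is to check that the remainders $\OO(n^{-1}m^{-1})$ and $\OO(n^{-2})$ in the displayed pointwise identity are uniform in $y\in[0,1]$, including near the boundary points $\{0,1\}$ where the variance expansion of Proposition~\ref{prop:var.hat.F.n.m} is only stated on $(0,1)$. I would settle this by revisiting the proofs of Propositions~\ref{prop:bias.hat.F.n.m} and~\ref{prop:var.hat.F.n.m}: the $y$-dependence there enters only through quantities bounded on $[0,1]$ --- the densities $f$ and $f_{Y_1\mid\bb{X}_1}$ are continuous, hence bounded, under Assumptions~\ref{ass:3}--\ref{ass:4}; the conditional CDFs lie in $[0,1]$; and the propensities are bounded below by $\pi_{\min}$ --- so those remainders admit bounds uniform in $y$, and integrating over a unit-length interval preserves their orders.

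Finally, with the MISE expansion established, the optimal degree follows from elementary calculus. Writing $A_B = \int_0^1 B^2(y)\rd y$ and $A_V = \int_0^1 V(y)\rd y$, the only $m$-dependent leading terms are $g(m) = -n^{-1} m^{-1/2} A_V + m^{-2} A_B$. Treating $m$ as continuous and solving $g'(m) = \tfrac{1}{2} n^{-1} m^{-3/2} A_V - 2 m^{-3} A_B = 0$ gives $m^{3/2} = 4 n A_B / A_V$, hence $m_{\mathrm{opt}} = n^{2/3}(4 A_B / A_V)^{2/3}$, exactly the stated formula; the nondegeneracy hypothesis $A_B \cdot A_V \neq 0$ is what makes this optimizer well defined. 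Substituting back and simplifying with $256 = 4^4$ gives $g(m_{\mathrm{opt}}) = -3\cdot 4^{-4/3} n^{-4/3} A_V^{4/3} A_B^{-1/3} = -n^{-4/3}[27 A_V^4/(256 A_B)]^{1/3}$, which matches the claimed second-order term, completing the proof.
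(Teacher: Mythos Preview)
Your proposal is correct and follows essentially the same route the paper takes: the corollary is stated without a dedicated proof, being an immediate consequence of integrating the pointwise MSE expansion in Corollary~\ref{cor:MSE.hat.F.n.m} and invoking the already-established pseudo MISE in Corollary~\ref{cor:MISE.tilde.F.n.m}. Your care in routing through Corollary~\ref{cor:MISE.tilde.F.n.m} to inherit the integration of the $\oo_y(n^{-1}m^{-1/2})$ term, and in checking that the residual $\OO(n^{-1}m^{-1})$ and $\OO(n^{-2})$ constants depend only on $\pi_{\min}$, $p_{\min}$, and the uniform bounds on $f$, $f_{Y_1\mid\bb{X}_1}$ from Assumptions~\ref{ass:3}--\ref{ass:4}, is exactly the right justification for the integrated remainders.
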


Finally, we establish the asymptotic normality of the feasible estimator. The conditions on the polynomial degree remain the same as for the pseudo estimator, but the asymptotic variance $\nu^2(y)$ is smaller than $\sigma^2(y)$, reflecting the efficiency gain from estimating the propensity scores. As is the case with the pseudo estimator, deriving functional limit results for this feasible process to perform uniform inference would necessitate further techniques, such as the bootstrap.

\begin{theorem}[Asymptotic normality]\label{thm:hat.F.n.m.asymp.normality}
Suppose that Assumptions~\ref{ass:1}--\ref{ass:4} hold, and assume that $\nu^2(y) > 0$ in \eqref{eq:C.nu}. Then, for any $y\in (0,1)$ and $\lambda\in (0,\infty)$, we have, as $n\to \infty$,
\[
n^{1/2} \{\widehat{F}_{n,m}(y) - F(y)\} \stackrel{d}{\longrightarrow}
\begin{cases}
\mathcal{N}(0,\nu^2(y)), &\mbox{if } n^{1/2} m^{-1} \to 0, \\
\mathcal{N}(\lambda B(y),\nu^2(y)), &\mbox{if } n^{1/2} m^{-1} \to \lambda,
\end{cases}
\]
where again the convergence in distribution is pointwise in $y$.
\end{theorem}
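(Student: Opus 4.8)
The plan is to reduce the statement to a Lindeberg--Feller central limit theorem for a triangular array of independent and identically distributed summands, exactly as for the pseudo estimator in Theorem~\ref{thm:tilde.F.n.m.asymp.normality}, the only new ingredient being an asymptotically linear representation of $\smash{\widehat{F}_{n,m}}$ that exposes the variance reduction already quantified in Proposition~\ref{prop:var.hat.F.n.m}. First I would exploit the discreteness of $\bb{X}$ (Assumption~\ref{ass:2}): grouping the units by the finitely many values $\bb{x}$ in the support, the estimated propensity \eqref{eq:hat.pi} is constant on each group and equals the observed-response rate $\hat{\pi}(\bb{x}) = m_{\bb{x}}/n_{\bb{x}}$, where $n_{\bb{x}} = \sum_i \ind_{\{\bb{X}_i = \bb{x}\}}$ and $m_{\bb{x}} = \sum_i \delta_i \ind_{\{\bb{X}_i = \bb{x}\}}$. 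Writing $\bar{b}(u,y) = \sum_{k=0}^m \ind_{\{u \leq k/m\}} b_{m,k}(y)$ for the smoothed indicator, this yields the grouped form $\smash{\widehat{F}_{n,m}(y) = n^{-1} \sum_{\bb{x}} \hat{\pi}(\bb{x})^{-1} \sum_{i:\bb{X}_i = \bb{x}} \delta_i \bar{b}(Y_i,y)}$.

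Second, I would linearize the inverse rate. Since $\hat{\pi}(\bb{x}) - \pi(\bb{x}) = n_{\bb{x}}^{-1} \sum_{i:\bb{X}_i = \bb{x}} (\delta_i - \pi(\bb{x})) = \OO_{L^2}(n^{-1/2})$ and $n_{\bb{x}}/n \to p(\bb{x}) \geq p_{\min} > 0$, a first-order expansion of $\hat{\pi}(\bb{x})^{-1}$ around $\pi(\bb{x})^{-1}$, combined with the law of large numbers for the within-group sums and the convergence $\mathcal{B}_m(F_{Y_1\mid\bb{X}_1=\bb{x}})(y) \to F_{Y_1\mid\bb{X}_1=\bb{x}}(y)$ (Assumption~\ref{ass:4}), gives the representation
\[
\widehat{F}_{n,m}(y) = \frac{1}{n} \sum_{i=1}^n \eta_{i,n}(y) + R_{n,m}(y), \qquad \eta_{i,n}(y) = \frac{\delta_i}{\pi_i(\bb{X}_i)} \bar{b}(Y_i,y) - \frac{\delta_i - \pi_i(\bb{X}_i)}{\pi_i(\bb{X}_i)} F_{Y_1\mid\bb{X}_1}(y),
\]
with $F_{Y_1\mid\bb{X}_1}(y)$ evaluated at $\bb{X}_i$. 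A direct conditional-variance computation (conditioning on $\bb{X}_i$ and using $\delta_i \perp Y_i \mid \bb{X}_i$) shows $\Var(\eta_{1,n}(y)) \to \nu^2(y)$ as $m \to \infty$, recovering the leading variance of Proposition~\ref{prop:var.hat.F.n.m}; indeed the subtracted term is the H\'ajek projection of the inverse-weight fluctuations onto $(\bb{X}_i,\delta_i)$, which is the mechanism behind the nonnegative reduction $C(y)$. The main obstacle is controlling the remainder $R_{n,m}(y)$ in $L^2$: it collects the quadratic term of the inverse-rate expansion, the replacement of the within-group averages by their limits, and the smoothing error $\mathcal{B}_m(F_{Y_1\mid\bb{X}_1})(y) - F_{Y_1\mid\bb{X}_1}(y)$, each of which must be shown to be $\oo_{L^2}(n^{-1/2})$; this requires bounding negative moments of $\hat{\pi}(\bb{x})$ and discarding the exponentially rare event that a group contains no observed unit, which is where Assumptions~\ref{ass:1}--\ref{ass:2} enter. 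Much of this bookkeeping is already carried out in the proof of Proposition~\ref{prop:var.hat.F.n.m} and can be reused.

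Finally, with the representation in hand the limit theorem is routine. For each $n$ the variables $\{\eta_{i,n}(y)\}_{i=1}^n$ are independent and identically distributed, with mean $\mathcal{B}_m(F)(y)$ and uniformly bounded by $2/\pi_{\min}$ by Assumption~\ref{ass:1}; since $\nu^2(y) > 0$ the Lindeberg truncation threshold $\epsilon\, n^{1/2} \Var(\eta_{1,n}(y))^{1/2}$ diverges, so the truncation vanishes for large $n$ and the Lindeberg--Feller theorem yields $n^{1/2}\{n^{-1}\sum_i \eta_{i,n}(y) - \mathcal{B}_m(F)(y)\} \stackrel{\mathrm{law}}{\longrightarrow} \mathcal{N}(0,\nu^2(y))$. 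It remains to add back the bias: by the expansion $\mathcal{B}_m(F)(y) - F(y) = m^{-1} B(y) + \oo(m^{-1})$ underlying Proposition~\ref{prop:bias.tilde.F.n.m} (Assumption~\ref{ass:3}), one has $n^{1/2}\{\mathcal{B}_m(F)(y) - F(y)\} \to 0$ when $n^{1/2} m^{-1} \to 0$ and $\to \lambda B(y)$ when $n^{1/2} m^{-1} \to \lambda$, while $n^{1/2} R_{n,m}(y) \to 0$ in probability from the remainder bound. Applying Slutsky's theorem to $n^{1/2}\{\widehat{F}_{n,m}(y) - F(y)\}$, decomposed into these three pieces, then delivers the two cases of the statement.
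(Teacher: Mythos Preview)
Your proposal is correct and follows essentially the same route as the paper. Both arguments extract from the proof of Proposition~\ref{prop:var.hat.F.n.m} an asymptotically linear representation of $\widehat{F}_{n,m}(y)$ as an iid average plus an $L^2$-negligible remainder, verify the Lindeberg condition via a uniform bound on the summands (using $\pi_i\geq\pi_{\min}$), and handle the deterministic bias via $\mathcal{B}_m(F)(y)-F(y)=m^{-1}B(y)+\oo(m^{-1})$ and Slutsky. The only cosmetic difference is that the paper keeps the smoothed conditional CDF $\mathcal{B}_m(F_{Y_i\mid\bb{X}_i})(y)$ in the correction term (its $U_{i,m}$), whereas you replace it by its limit $F_{Y_i\mid\bb{X}_i}(y)$ and absorb the $\OO(m^{-1})$ discrepancy into $R_{n,m}$; this changes the remainder from the paper's $\OO_{L^2}(n^{-1})$ to $\oo_{L^2}(n^{-1/2})$, which is still sufficient.
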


\section{Monte Carlo simulations}\label{sec:simulations}

In this section, we conduct a modest Monte Carlo study to evaluate three families of estimators for the CDF under MAR: the unsmoothed IPW empirical CDF, its Bernstein-smoothed version, and the \emph{integrated} IPW Gaussian KDE of \citet{MR2569798} (abbreviated I-IPW KDE). Each family is examined in two regimes: a pseudo version using known propensities and a feasible version using propensities estimated nonparametrically, as described in Section~\ref{sec:setup}. All simulations were implemented in \textsf{R}; see Appendix~\ref{app:code} for the GitHub link to the code.

\subsection{Data generating process and setup}

We consider a setting with a univariate continuous response $Y$ and a univariate discrete auxiliary covariate $X$ ($d = 1$). This entails no loss of generality for discrete covariates, given that any finite-dimensional discrete $\bb{X}$ can be recoded into a single factor by labeling the cells of the Cartesian product of its marginal categories. The data-generating process is:
\begin{itemize}\setlength\itemsep{0em}
\item $Y_i \sim \mathrm{Beta}(\alpha,\beta)$ with $(\alpha,\beta)=(0.9,0.9)$;
\item letting $F_{\mathrm{Beta}(0.9,0.9)}$ denote the $\mathrm{Beta}(0.9,0.9)$ CDF and $\Phi$ the standard normal CDF, define
\[
U_i = F_{\mathrm{Beta}(0.9,0.9)}(Y_i), \qquad
T_i = \Phi^{-1}(U_i),
\]
so that $T_i \sim \mathcal{N}(0,1)$. Next, generate
\[
X_i^\star = \rho \, T_i + \sqrt{1-\rho^{2}} \, Z_i, \qquad Z_i \sim \mathcal{N}(0,1),
\]
with $Z_i$ independent of $Y_i$, and fix $\rho = 0.6$. Finally, define the discrete auxiliary covariate by
\[
X_i = \ind_{\{X_i^\star > 0\}}.
\]
Then $X_i \sim \mathrm{Bernoulli}(p_X)$ with $p_X=0.5$. In particular, $X_i$ and $Y_i$ are dependent through the latent Gaussian construction.
\item missingness is MAR with logistic propensity
\[
\pi(x) = \PP(\delta_i = 1 \mid X_i=x) = [1+\exp\{-(\beta_0 + \beta_1 x)\}]^{-1}, \qquad x\in\{0,1\},
\]
where $(\beta_0,\beta_1)$ are chosen so that
\[
\pi(0)=0.6, \qquad \pi(1)=0.9.
\]
Equivalently,
\[
\beta_0 = \log(0.6/0.4), \qquad
\beta_1 = \log(0.9/0.1)-\log(0.6/0.4),
\]
so that $\EE[\pi(X_i)]=0.75$ and $\pi_{\min}=0.6$.
\end{itemize}
For the feasible estimators, the propensity is estimated nonparametrically using
\[
\widehat{\pi}(x)=\frac{\sum_{i=1}^n \delta_i \ind_{\{X_i = x\}}}{\sum_{i=1}^n \ind_{\{X_i = x\}}}.
\]

We compare six estimators of the CDF $F$, based on the true IPW weights $W_i=\delta_i/\pi(X_i)$ in the pseudo setting and the estimated IPW weights $\widehat{W}_i=\delta_i/\widehat{\pi}(X_i)$ in the feasible setting:
\begin{enumerate}[label=(\roman*)]\setlength\itemsep{0em}
\item the unsmoothed pseudo estimator $\widetilde F_n$ using $W_i$;
\item the pseudo I-IPW KDE, obtained by integrating the IPW KDE of \citet{MR2569798} constructed with $W_i$;
\item the Bernstein-smoothed pseudo estimator $\widetilde F_{n,m}=\mathcal B_m(\widetilde F_n)$ (smoothing $\widetilde F_n$);
\item the unsmoothed feasible estimator $\widehat{F}_n$ using $\widehat{W}_i$;
\item the feasible I-IPW KDE, obtained by integrating the IPW KDE built with $\widehat{W}_i$;
\item the Bernstein-smoothed feasible estimator $\widehat{F}_{n,m}=\mathcal B_m(\widehat{F}_n)$ (smoothing $\widehat{F}_n$).
\end{enumerate}

\paragraph{Simulation parameters}
We conduct two Monte Carlo experiments, each with $1000$ replications. The first (Section~\ref{sec:sample.size.experiment}) examines the impact of the sample size under the benchmark MAR mechanism defined above, with
\[
n \in \{25,50,100,200,400,800,1600,3200,6400\}.
\]
The second (Section~\ref{sec:missing.rate.experiment}) examines the impact of the missing rate at the fixed sample size $n=400$. In that experiment, we retain the same latent Gaussian dependence between $X$ and $Y$, and we vary only the intercept of the logistic MAR mechanism, keeping the slope coefficient $\beta_1$ fixed at its benchmark value. More precisely, for each target missing rate $r\in\{5\%,10\%,15\%,20\%,25\%,30\%,35\%,40\%\}$, we define
\[
\pi_r(x)=\{1+\exp[-(\beta_0(r)+\beta_1 x)]\}^{-1}, \qquad x\in\{0,1\},
\]
where $\beta_0(r)$ is chosen so that
\[
1-\EE[\pi_r(X_i)] = r.
\]

\paragraph{Performance measures}
For each replication, we compute the integrated squared error,
\[
\mathrm{ISE}(\widehat{F})=\int_0^1 \{\widehat{F}(y)-F(y)\}^2 \, \rd y,
\]
and the boundary ISE,
\[
\mathrm{BISE}(\widehat{F}) = \frac{1}{2\delta} \int_{[0,\delta] \cup [1-\delta,1]} \{\widehat{F}(y)-F(y)\}^2 \, \rd y, \qquad \delta = n^{-2/3}.
\]
For each estimator, we compute the integrals numerically using the cubature routine \texttt{adaptIntegrate}. We summarize the distribution of the ISEs and BISEs across replications by their mean and standard deviation.

\subsection{Smoothing-parameter selection via optimized LSCV}

For the smoothed Bernstein estimator $\smash{\widehat{F}_{n,m}}$, the polynomial degree $m$ is chosen by least-squares cross-validation (LSCV). The derivations below are completely analogous for $\smash{\widetilde{F}_{n,m}}$, and thus are omitted. The LSCV criterion (up to an additive constant independent of $m$) is
\begin{equation}\label{eq:LSCV-final}
\mathrm{LSCV}(m) =
\underbrace{\int_0^1 \widehat{F}_{n,m}(y)^2 \, \rd y}_{\text{Term 1}}
-
\underbrace{\frac{2}{n}\sum_{i=1}^n \widehat{W}_i \int_{Y_i}^{1} \widehat{F}_{n,m}^{(-i)}(y) \, \rd y}_{\text{Term 2}},
\end{equation}
where $\smash{\widehat{F}_{n,m}^{(-i)}}$ denotes the leave-one-out version (i.e., $\smash{\widehat{F}_{n,m}}$ with the $i$th data point left out).

\paragraph{Term 1}
Using bilinearity and the Bernstein basis, we obtain
\[
\int_0^1 \widehat{F}_{n,m}(y)^2 \, \rd y
= \sum_{k=0}^m \sum_{\ell=0}^m \widehat{F}_n(k/m) \widehat{F}_n(\ell/m) \binom{m}{k} \binom{m}{\ell} B(k+\ell+1, 2m-k-\ell+1),
\]
where $B(\cdot,\cdot)$ denotes the Beta function \citep[see, e.g.,][Section~5.12]{NIST2010}. Term 1 is evaluated in $\OO(m^2)$ operations.

\paragraph{Term 2}
By Fubini's theorem and the identity $F(y) = \EE[\ind_{\{y \geq Y\}}]$,
\[
\int_0^1 \widehat{F}_{n,m}(y) F(y) \, \rd y
= \EE\left[\int_Y^1 \widehat{F}_{n,m}(y) \, \rd y\right],
\]
which motivates the leave-one-out estimator in \eqref{eq:LSCV-final}. Expanding the Bernstein operator and integrating the Bernstein basis functions gives
\[
\int_{Y_i}^{1} b_{m,k}(y) \, \rd y = \frac{1-\mathrm{pbeta}(Y_i; k+1, m-k+1)}{m+1},
\]
where $\mathrm{pbeta}(\cdot \, ; \alpha, \beta)$ denotes the CDF of the $\mathrm{Beta}(\alpha,\beta)$ distribution, so that
\[
\int_{Y_i}^{1} \widehat{F}_{n,m}^{(-i)}(y) \, \rd y
= \sum_{k=0}^m \widehat{F}_n^{(-i)}(k/m) \frac{1-\mathrm{pbeta}(Y_i; k+1, m-k+1)}{m+1}.
\]
Here, $\widehat{F}_n^{(-i)}(k/m)$ can be obtained without recomputing from scratch via
\[
\widehat{F}_n^{(-i)}(k/m) = \frac{n \widehat{F}_n(k/m) - \widehat{W}_i \ind_{\{Y_i \leq k/m\}}}{n-1}.
\]
Thus Term~2 is computed in $\OO(nm)$ operations. Overall, $\mathrm{LSCV}(m)$ is evaluated in $\OO(m^2+nm)$ operations for each $m$. We search $m$ over a data-dependent grid
\[
m \in \{m_{\min},\ldots,m_{\max}(n)\}, \qquad
m_{\max}(n)=\min(\lfloor c n^{2/3}\rfloor,~m_{\mathrm{cap}},~n),
\]
with $(m_{\min},m_{\mathrm{cap}},c)=(1,300,5)$ in our \textsf{R} script. The selected $m^\star$ minimizes \eqref{eq:LSCV-final}.

For the I-IPW KDE competitor, the kernel bandwidth $h$ is chosen by an entirely analogous LSCV scheme; see Section~2.2 of \citet{MR2569798} for details.

\subsection{Impact of the sample size}\label{sec:sample.size.experiment}

This first experiment examines how estimator performance evolves with the sample size $n$ under the benchmark MAR mechanism described above. Tables~\ref{tab:pseudo_ISE_sample_size} and~\ref{tab:pseudo_BISE_sample_size} report, respectively, the ISE and BISE summaries for the pseudo estimators. Tables~\ref{tab:feasible_ISE_sample_size} and~\ref{tab:feasible_BISE_sample_size} report the analogous quantities for the feasible estimators. Each table reports the mean and standard deviation across replications, with columns ordered as the IPW empirical estimator (labeled Unsmoothed), the I-IPW KDE, and the Bernstein estimator. \bigskip

\begin{table}[H]
\centering
\small
\begingroup
\renewcommand{\arraystretch}{       1.15}
\begin{tabular}{|r|ccc|ccc|}
\hline
$n$ & \multicolumn{3}{|c|}{Mean ISE ($\times 10^{8}$)} & \multicolumn{3}{|c|}{Standard deviation ISE ($\times 10^{8}$)} \\
\hline
  & Unsmoothed & I-IPW KDE & Bernstein & Unsmoothed & I-IPW KDE & Bernstein \\
\hline
25 & 1647074 & 1358550 & \textbf{879985} & 1608140 & 1426764 & \textbf{1309825} \\[-0.5mm]
50 & 815268 & 699262 & \textbf{443236} & 784977 & 725813 & \textbf{599672} \\[-0.5mm]
100 & 391641 & 340189 & \textbf{224728} & 427206 & 399121 & \textbf{341277} \\[-0.5mm]
200 & 199960 & 179626 & \textbf{114586} & 200241 & 190795 & \textbf{150616} \\[-0.5mm]
400 & 97876 & 90737 & \textbf{64225} & 99395 & 97440 & \textbf{79036} \\[-0.5mm]
800 & 48878 & 46379 & \textbf{38236} & 52316 & 52045 & \textbf{42933} \\[-0.5mm]
1600 & 24831 & \textbf{23921} & 24491 & 25416 & 25231 & \textbf{22532} \\[-0.5mm]
3200 & 12384 & \textbf{12087} & 12974 & 13210 & 13174 & \textbf{12594} \\[-0.5mm]
6400 & 6707 & 6633 & \textbf{6395} & 6742 & 6738 & \textbf{6702} \\
\hline
\end{tabular}
\endgroup
\caption{ISE statistics for pseudo estimators as a function of the sample size.}
\label{tab:pseudo_ISE_sample_size}
\end{table}

\begin{table}[H]
\centering
\small
\begingroup
\renewcommand{\arraystretch}{       1.15}
\begin{tabular}{|r|ccc|ccc|}
\hline
$n$ & \multicolumn{3}{|c|}{Mean BISE ($\times 10^{8}$)} & \multicolumn{3}{|c|}{Standard deviation BISE ($\times 10^{8}$)} \\
\hline
  & Unsmoothed & I-IPW KDE & Bernstein & Unsmoothed & I-IPW KDE & Bernstein \\
\hline
25 & 1139302 & 1667561 & \textbf{782580} & 1279327 & 1685791 & \textbf{1091595} \\[-0.5mm]
50 & 517937 & 894690 & \textbf{406378} & 596773 & 876662 & \textbf{537461} \\[-0.5mm]
100 & 229159 & 444997 & \textbf{195663} & 317125 & 441765 & \textbf{290511} \\[-0.5mm]
200 & 104407 & 231867 & \textbf{94370} & 133667 & 210253 & \textbf{128030} \\[-0.5mm]
400 & 54455 & 125259 & \textbf{51809} & 72594 & 116153 & \textbf{71233} \\[-0.5mm]
800 & 25206 & 63876 & \textbf{24492} & 35856 & 60658 & \textbf{34790} \\[-0.5mm]
1600 & \textbf{11926} & 28213 & 12104 & \textbf{15886} & 25092 & 16289 \\[-0.5mm]
3200 & 6052 & 14266 & \textbf{6047} & 9102 & 13351 & \textbf{9078} \\[-0.5mm]
6400 & \textbf{3049} & 5101 & 3078 & \textbf{4236} & 5406 & 4278 \\
\hline
\end{tabular}
\endgroup
\caption{Boundary ISE statistics for pseudo estimators as a function of the sample size.}
\label{tab:pseudo_BISE_sample_size}
\end{table}

\begin{table}[H]
\centering
\small
\begingroup
\renewcommand{\arraystretch}{       1.15}
\begin{tabular}{|r|ccc|ccc|}
\hline
$n$ & \multicolumn{3}{|c|}{Mean ISE ($\times 10^{8}$)} & \multicolumn{3}{|c|}{Standard deviation ISE ($\times 10^{8}$)} \\
\hline
  & Unsmoothed & I-IPW KDE & Bernstein & Unsmoothed & I-IPW KDE & Bernstein \\
\hline
25 & 954524 & 713278 & \textbf{239168} & 798381 & 589142 & \textbf{550044} \\[-0.5mm]
50 & 455141 & 346095 & \textbf{120758} & 401704 & 325848 & \textbf{281482} \\[-0.5mm]
100 & 220940 & 174579 & \textbf{67042} & 200497 & 172123 & \textbf{143507} \\[-0.5mm]
200 & 112125 & 93299 & \textbf{40544} & 102896 & 93909 & \textbf{69979} \\[-0.5mm]
400 & 56231 & 49072 & \textbf{27131} & 53764 & 51132 & \textbf{37264} \\[-0.5mm]
800 & 26230 & 23670 & \textbf{18487} & 21966 & 21336 & \textbf{14154} \\[-0.5mm]
1600 & 13543 & \textbf{12685} & 13695 & 11894 & 11753 & \textbf{8238} \\[-0.5mm]
3200 & 6939 & \textbf{6648} & 7774 & 6027 & 5993 & \textbf{5908} \\[-0.5mm]
6400 & 3794 & 3722 & \textbf{3483} & 3371 & \textbf{3367} & 3502 \\
\hline
\end{tabular}
\endgroup
\caption{ISE statistics for feasible estimators as a function of the sample size.}
\label{tab:feasible_ISE_sample_size}
\end{table}

\begin{table}[H]
\centering
\small
\begingroup
\renewcommand{\arraystretch}{       1.15}
\begin{tabular}{|r|ccc|ccc|}
\hline
$n$ & \multicolumn{3}{|c|}{Mean BISE ($\times 10^{8}$)} & \multicolumn{3}{|c|}{Standard deviation BISE ($\times 10^{8}$)} \\
\hline
  & Unsmoothed & I-IPW KDE & Bernstein & Unsmoothed & I-IPW KDE & Bernstein \\
\hline
25 & 363853 & 1009706 & \textbf{46069} & 357238 & 879450 & \textbf{91124} \\[-0.5mm]
50 & 118404 & 500208 & \textbf{16164} & 120360 & 439568 & \textbf{28589} \\[-0.5mm]
100 & 38755 & 265221 & \textbf{6693} & 36463 & 194615 & \textbf{3494} \\[-0.5mm]
200 & 12347 & 141440 & \textbf{3333} & 11561 & 88932 & \textbf{706} \\[-0.5mm]
400 & 3997 & 73016 & \textbf{1682} & 3858 & 38645 & \textbf{319} \\[-0.5mm]
800 & 1099 & 37007 & \textbf{797} & 1080 & 16500 & \textbf{215} \\[-0.5mm]
1600 & \textbf{289} & 17649 & 311 & 296 & 7321 & \textbf{121} \\[-0.5mm]
3200 & \textbf{57} & 8144 & 88 & 61 & 2052 & \textbf{48} \\[-0.5mm]
6400 & \textbf{12} & 2213 & 21 & 12 & 1271 & \textbf{12} \\
\hline
\end{tabular}
\endgroup
\caption{Boundary ISE statistics for feasible estimators as a function of the sample size.}
\label{tab:feasible_BISE_sample_size}
\end{table}

Across Tables~\ref{tab:pseudo_ISE_sample_size}--\ref{tab:feasible_BISE_sample_size}, both the mean and the standard deviation of the error measures decrease steadily with $n$, as expected. For global accuracy, measured by the ISE, the Bernstein estimator clearly dominates in the small-to-medium sample-size range in both the pseudo and feasible regimes: from $n=25$ up to $n=800$, it has the smallest mean ISE and the smallest standard deviation in every reported case. For larger sample sizes, the three procedures become much closer, which is exactly what one would expect since they share the same first-order asymptotics. In particular, in both the pseudo and feasible regimes, the I-IPW KDE has a slightly smaller \emph{mean} ISE at $n=1600$ and $n=3200$, while the Bernstein estimator is again best at $n=6400$; these differences are numerically very small compared with the pronounced gains seen at smaller $n$. The standard deviations remain smallest for the Bernstein estimator throughout essentially the whole range, up to negligible near-ties at the very largest sample sizes.

For boundary accuracy, the picture is even sharper. The I-IPW KDE is consistently disfavored by the BISE criterion, often by a wide margin, reflecting the familiar boundary spillover of the Gaussian kernel. This boundary effect becomes less damaging as $n$ grows, because the selected bandwidth decreases and the boundary region $[0,\delta]\cup[1-\delta,1]$ itself shrinks. Even so, the BISE normalization by $2\delta$ keeps boundary leakage visible, so the KDE remains clearly inferior on that metric. By contrast, the Bernstein estimator is best in the small-to-medium range for both pseudo and feasible estimators, in mean as well as in standard deviation. For large sample sizes, the unsmoothed IPW estimator and the Bernstein estimator become nearly indistinguishable for BISE, with the unsmoothed estimator occasionally having a marginally smaller mean in the pseudo regime ($n=1600$ and $n=6400$) and more systematically in the feasible regime from $n=1600$ onward, whereas Bernstein typically retains the smaller or comparable dispersion.

From a practical standpoint, the small and medium sample sizes are the most relevant, and there the Bernstein estimator wins decisively, both globally and near the boundary.

\subsection{Impact of the missing rate}\label{sec:missing.rate.experiment}

This second experiment examines how estimator performance changes as the missing rate increases. We fix $n=400$ and retain the same latent Gaussian dependence structure between $X$ and $Y$, while varying only the intercept of the logistic MAR mechanism to obtain target missing rates $5\%$, $10\%$, $15\%$, $20\%$, $25\%$, $30\%$, $35\%$, and $40\%$. Tables~\ref{tab:pseudo_ISE_missing_rate} and~\ref{tab:pseudo_BISE_missing_rate} report, respectively, the ISE and BISE summaries for the pseudo estimators. Tables~\ref{tab:feasible_ISE_missing_rate} and~\ref{tab:feasible_BISE_missing_rate} report the analogous quantities for the feasible estimators. Each table reports the mean and standard deviation across replications, with columns ordered as the IPW empirical estimator (labeled Unsmoothed), the I-IPW KDE, and the Bernstein estimator. \bigskip

\begin{table}[H]
\centering
\small
\begingroup
\renewcommand{\arraystretch}{       1.15}
\begin{tabular}{|r|ccc|ccc|}
\hline
Missing rate (\%) & \multicolumn{3}{|c|}{Mean ISE ($\times 10^{8}$)} & \multicolumn{3}{|c|}{Standard deviation ISE ($\times 10^{8}$)} \\
\hline
  & Unsmoothed & I-IPW KDE & Bernstein & Unsmoothed & I-IPW KDE & Bernstein \\
\hline
5 & 49846 & 45040 & \textbf{30627} & 45293 & 43778 & \textbf{33023} \\[-0.5mm]
10 & 59864 & 54385 & \textbf{37470} & 54372 & 52865 & \textbf{40325} \\[-0.5mm]
15 & 71677 & 65463 & \textbf{47369} & 70809 & 68961 & \textbf{56305} \\[-0.5mm]
20 & 79770 & 73324 & \textbf{51737} & 79509 & 77866 & \textbf{63512} \\[-0.5mm]
25 & 101121 & 93653 & \textbf{67903} & 105151 & 102533 & \textbf{87499} \\[-0.5mm]
30 & 125709 & 117153 & \textbf{82657} & 131487 & 128308 & \textbf{103824} \\[-0.5mm]
35 & 146234 & 136613 & \textbf{99886} & 165434 & 161487 & \textbf{136999} \\[-0.5mm]
40 & 176410 & 163611 & \textbf{121891} & 193369 & 188011 & \textbf{159135} \\
\hline
\end{tabular}
\endgroup
\caption{ISE statistics for pseudo estimators as a function of the missing rate.}
\label{tab:pseudo_ISE_missing_rate}
\end{table}

\begin{table}[H]
\centering
\small
\begingroup
\renewcommand{\arraystretch}{       1.15}
\begin{tabular}{|r|ccc|ccc|}
\hline
Missing rate (\%) & \multicolumn{3}{|c|}{Mean BISE ($\times 10^{8}$)} & \multicolumn{3}{|c|}{Standard deviation BISE ($\times 10^{8}$)} \\
\hline
  & Unsmoothed & I-IPW KDE & Bernstein & Unsmoothed & I-IPW KDE & Bernstein \\
\hline
5 & 10047 & 57672 & \textbf{8202} & 11454 & 36879 & \textbf{9716} \\[-0.5mm]
10 & 16493 & 65693 & \textbf{14816} & 19983 & 50029 & \textbf{18638} \\[-0.5mm]
15 & 26482 & 79045 & \textbf{25054} & 36023 & 63341 & \textbf{35503} \\[-0.5mm]
20 & 36640 & 98535 & \textbf{35216} & 51437 & 88029 & \textbf{50568} \\[-0.5mm]
25 & 52222 & 116896 & \textbf{49833} & 69333 & 109618 & \textbf{68119} \\[-0.5mm]
30 & 68140 & 148497 & \textbf{64454} & 87625 & 134854 & \textbf{85735} \\[-0.5mm]
35 & 89121 & 182761 & \textbf{85419} & 122119 & 169637 & \textbf{120135} \\[-0.5mm]
40 & 115383 & 209867 & \textbf{112690} & 154893 & 206946 & \textbf{154663} \\
\hline
\end{tabular}
\endgroup
\caption{Boundary ISE statistics for pseudo estimators as a function of the missing rate.}
\label{tab:pseudo_BISE_missing_rate}
\end{table}

\begin{table}[H]
\centering
\small
\begingroup
\renewcommand{\arraystretch}{       1.15}
\begin{tabular}{|r|ccc|ccc|}
\hline
Missing rate (\%) & \multicolumn{3}{|c|}{Mean ISE ($\times 10^{8}$)} & \multicolumn{3}{|c|}{Standard deviation ISE ($\times 10^{8}$)} \\
\hline
  & Unsmoothed & I-IPW KDE & Bernstein & Unsmoothed & I-IPW KDE & Bernstein \\
\hline
5 & 44112 & 39257 & \textbf{25455} & 39272 & 37832 & \textbf{27858} \\[-0.5mm]
10 & 48057 & 42640 & \textbf{26531} & 41309 & 39581 & \textbf{28731} \\[-0.5mm]
15 & 51462 & 45414 & \textbf{27984} & 46962 & 44899 & \textbf{33173} \\[-0.5mm]
20 & 52249 & 45862 & \textbf{26533} & 46526 & 44178 & \textbf{32603} \\[-0.5mm]
25 & 56802 & 49492 & \textbf{27196} & 47827 & 45563 & \textbf{33449} \\[-0.5mm]
30 & 62124 & 53937 & \textbf{28045} & 56075 & 52849 & \textbf{38080} \\[-0.5mm]
35 & 69724 & 59972 & \textbf{30044} & 61193 & 57088 & \textbf{41029} \\[-0.5mm]
40 & 75947 & 64513 & \textbf{30308} & 61937 & 57043 & \textbf{42423} \\
\hline
\end{tabular}
\endgroup
\caption{ISE statistics for feasible estimators as a function of the missing rate.}
\label{tab:feasible_ISE_missing_rate}
\end{table}

\begin{table}[H]
\centering
\small
\begingroup
\renewcommand{\arraystretch}{       1.15}
\begin{tabular}{|r|ccc|ccc|}
\hline
Missing rate (\%) & \multicolumn{3}{|c|}{Mean BISE ($\times 10^{8}$)} & \multicolumn{3}{|c|}{Standard deviation BISE ($\times 10^{8}$)} \\
\hline
  & Unsmoothed & I-IPW KDE & Bernstein & Unsmoothed & I-IPW KDE & Bernstein \\
\hline
5 & 2970 & 49386 & \textbf{1624} & 2803 & 24490 & \textbf{434} \\[-0.5mm]
10 & 3126 & 52148 & \textbf{1616} & 3014 & 27510 & \textbf{412} \\[-0.5mm]
15 & 3387 & 57372 & \textbf{1629} & 3273 & 29030 & \textbf{438} \\[-0.5mm]
20 & 3640 & 66451 & \textbf{1661} & 3650 & 36393 & \textbf{357} \\[-0.5mm]
25 & 3827 & 69887 & \textbf{1673} & 3608 & 37206 & \textbf{336} \\[-0.5mm]
30 & 4187 & 82539 & \textbf{1700} & 4416 & 42584 & \textbf{346} \\[-0.5mm]
35 & 4550 & 94309 & \textbf{1720} & 4680 & 48375 & \textbf{381} \\[-0.5mm]
40 & 5079 & 106013 & \textbf{1745} & 5100 & 55943 & \textbf{325} \\
\hline
\end{tabular}
\endgroup
\caption{Boundary ISE statistics for feasible estimators as a function of the missing rate.}
\label{tab:feasible_BISE_missing_rate}
\end{table}

Tables~\ref{tab:pseudo_ISE_missing_rate}--\ref{tab:feasible_BISE_missing_rate} show that increasing missingness makes the problem harder, as expected. The mean and standard deviation of the ISE rise with the missing rate for all three methods in both regimes, and the same overall deterioration is visible for BISE as well, especially for the unsmoothed estimator and the I-IPW KDE. The Bernstein estimator is uniformly best across the board: in both the pseudo and feasible regimes, it has the smallest mean and the smallest standard deviation for both ISE and BISE at every reported missing rate. This is fully consistent with the sample-size experiment, since the present design fixes $n=400$, which lies in the range where Bernstein smoothing already showed its clearest advantage.

The boundary-spillover effect of the I-IPW KDE is again very noticeable. For ISE, the KDE typically sits between the unsmoothed and Bernstein estimators, but for BISE it is systematically much worse than the other two procedures, often by a very large margin. The feasible Bernstein estimator is particularly strong: even at $40\%$ missingness, it still has the smallest mean ISE and BISE and the smallest dispersion by a comfortable margin. More generally, while higher missingness degrades performance for all methods, the Bernstein smoother remains the most robust procedure in this experiment, both globally and at the boundary.

\subsection{Heuristic order of the LSCV-selected degree}\label{sec:LSCV.bandwidth.asymptotics}

A full proof of the asymptotic behavior of the LSCV selector is beyond the scope of this paper, but the expected scale of $m^\star$ is quite clear and is the exact Bernstein analogue of the kernel-CDF argument of \citet{MR1666695}. To keep the notation light, write the oracle version of \eqref{eq:LSCV-final} as
\[
H_n(m) \leqdef \int_0^1 \widetilde{F}_{n,m}(y)^2 \, \rd y - \frac{2}{n} \sum_{i=1}^n W_i \int_{Y_i}^1 \widetilde{F}_{n,m}^{(-i)}(y) \, \rd y, \qquad W_i = \delta_i/\pi(\bb{X}_i),
\]
and set $R_n(m) \leqdef \mathrm{MISE}[\widetilde{F}_{n,m}]$. Since $\EE[W_i \ind_{\{Y_i \leq y\}}] = F(y)$, since $\widetilde{F}_{n,m}^{(-i)}$ is independent of $(W_i,Y_i)$, and since $\EE[\widetilde{F}_{n,m}^{(-i)}(y)] = \mathcal{B}_m(F)(y)$, we get
\[
\begin{aligned}
\EE[H_n(m)]
&= \int_0^1 \EE[\widetilde{F}_{n,m}(y)^2] \rd y - 2 \int_0^1 F(y) \mathcal{B}_m(F)(y) \rd y \\
&= \int_0^1 \Var(\widetilde{F}_{n,m}(y)) \rd y + \int_0^1 \{\mathcal{B}_m(F)(y) - F(y)\}^2 \rd y - \int_0^1 F(y)^2 \rd y \\
&= R_n(m) - \int_0^1 F(y)^2 \rd y.
\end{aligned}
\]
Thus, up to the irrelevant constant $-\int_0^1 F(y)^2 \rd y$ (which does not depend on $m$), the oracle LSCV curve is an unbiased estimator of the oracle MISE curve.

The key binomial moment identities
\[
\sum_{k=0}^m (k/m - y) \, b_{m,k}(y) = 0, \qquad \sum_{k=0}^m (k/m - y)^2 \, b_{m,k}(y) = \frac{y(1-y)}{m}
\]
show that the binomial basis behaves like a kernel with effective bandwidth of order $m^{-1/2}$. For this reason, one expects the proof of \citet{MR1666695} to carry over after the substitution $h = m^{-1/2}$. More precisely, one should expand $H_n(m) - \EE[H_n(m)]$ as a linear term, a degenerate quadratic term, and a diagonal term, exactly as in their proof. If
\[
J_n \leqdef \int_0^1 \left( \{\widetilde{F}_n(y) - F(y)\}^2 - \EE[\{\widetilde{F}_n(y) - F(y)\}^2] \right) \rd y,
\]
then the Bernstein analogue of their Theorem~1 should give
\[
H_n(m) + J_n = R_n(m) + \Delta_n(m),
\]
where, for every $\delta > 0$,
\[
\Delta_n(m) = \OO\left(n^{\delta} \left\{ n^{-1/2} m^{-3/2} + n^{-1} m^{-3/4} + n^{-3/2} \right\}\right)
\]
with probability tending to $1$, uniformly for $m = n^{2/3} u$ with $u$ in any fixed compact subset of $(0,\infty)$. These are exactly the Bernstein counterparts of the terms $n^{-1/2} h^3$, $n^{-1} h^{3/2}$, and $n^{-3/2}$ in Equation~(4) of \citet{MR1666695}. On the scale $m = n^{2/3} u$, the right-hand side is $o(n^{-4/3})$ as soon as $\delta < 1/6$.

Now let
\[
A_V \leqdef \int_0^1 V(y) \rd y, \qquad A_B \leqdef \int_0^1 B^2(y) \rd y,
\]
and note from Corollary~\ref{cor:MISE.tilde.F.n.m} that
\[
R_n(m) = n^{-1} \int_0^1 \sigma^2(y) \rd y - n^{-1} m^{-1/2} A_V + m^{-2} A_B + \oo(n^{-1} m^{-1/2}) + \oo(m^{-2}).
\]
If $m = n^{\alpha}$, then
\[
R_n(m) - n^{-1} \int_0^1 \sigma^2(y) \rd y = A_B n^{-2\alpha} - A_V n^{-1-\alpha/2} + o\left(n^{-2\alpha} + n^{-1-\alpha/2}\right).
\]
Hence $\alpha < 2/3$ makes the positive bias term dominate, while $\alpha > 2/3$ makes the variance gain smaller than the $n^{-4/3}$ improvement available on the balanced scale. Thus the only possible order is $\alpha = 2/3$. Writing $m = n^{2/3} u$, we get
\[
R_n(m) = n^{-1} \int_0^1 \sigma^2(y) \rd y + n^{-4/3} \{\psi(u) + o(1)\}, \qquad \psi(u) = A_B u^{-2} - A_V u^{-1/2},
\]
and
\[
\psi'(u) = -2 A_B u^{-3} + \frac{1}{2} A_V u^{-3/2}.
\]
Therefore $\psi$ has the unique minimizer
\[
u_0 = \left(\frac{4 A_B}{A_V}\right)^{2/3} = \left[\frac{4 \int_0^1 B^2(y) \rd y}{\int_0^1 V(y) \rd y}\right]^{2/3}.
\]
Fix $\varepsilon > 0$. Since $\psi$ has a unique minimum at $u_0$, there exists $c_{\varepsilon} > 0$ such that $\psi(u) \geq \psi(u_0) + 3 c_{\varepsilon}$ whenever $|u-u_0| \geq \varepsilon$ and $u$ stays in a fixed compact set containing $u_0$. The uniform bound on $\Delta_n(m)$ is $o(n^{-4/3})$, so the same inequality holds for $H_n(n^{2/3} u)$ with probability tending to $1$. This gives the heuristic conclusion
\[
m^\star n^{-2/3} \to u_0,
\]
in probability as $n\to\infty$, and in particular
\[
\PP\left(c_1 n^{2/3} \leq m^\star \leq c_2 n^{2/3}\right) \to 1,
\]
for any $0 < c_1 < u_0 < c_2 < \infty$. For the feasible selector, Corollary~\ref{cor:MISE.hat.F.n.m} shows that propensity estimation only changes the $m$-free $n^{-1}$ term and adds an $m$-dependent remainder $\OO(n^{-1} m^{-1})$, which is $o(n^{-4/3})$ when $m$ is of order $n^{2/3}$, so the same asymptotic scale should prevail. Thus the cap $m_{\max}(n) \propto n^{2/3}$ used in Section~4.2 is not ad hoc.

Finally, this heuristic is also in line with what is already known for Bernstein polynomial smoothing in the full-data CDF setting. The expansions in \citet{MR2960952} show that the leading bias and variance terms balance when $m$ is of order $n^{2/3}$, which plays the same heuristic role for Bernstein smoothing that the bandwidth order $n^{-1/3}$ plays for classical kernel CDF smoothing. See also \citet{MR2925964} for boundary-region properties. We are not aware of a published result establishing the analogue of $m^\star n^{-2/3} \to u_0$ for an LSCV selector in the Bernstein CDF setting. Nevertheless, \citet{MR3473628} develops data-driven Bernstein estimators with random degree by minimizing estimated pointwise MSE or global MISE and proves consistency and convergence-rate results, with the search for $m$ restricted to intervals proportional to $n^{2/3}$.

Several closely related Bernstein-based results are also worth noting, although none of them proves an asymptotic theorem for an LSCV-selected degree of a Bernstein CDF estimator. In the multivariate bounded-support setting, \citet{MR3916632} propose a Bernstein polynomial model for multivariate distribution and density estimation, choose the coordinatewise degrees by a change-point rule, and obtain a nearly parametric rate in mean $\chi^2$-divergence for the resulting density estimator. For univariate densities on $[0,1]$ that may be unbounded at $0$, \citet{MR2351744} establish uniform weak and strong consistency on compact subsets of $(0,1)$ and develop both least-squares and likelihood cross-validation rules for selecting the smoothing parameter. For copula densities that may be unbounded at the corners, \citet{MR3143795} establish boundary and interior asymptotic properties of the Bernstein estimator and study a least-squares cross-validation rule for the smoothing parameter, with simulations examining finite-sample performance.

\section{Real-data application}\label{sec:application}

We highlight the method's utility in the US National Health and Nutrition Examination Survey (NHANES) 2017--2018 cycle \citep{NCHS_DEMO_J_2017_2018}; see
\citet{doi:10.32614/CRAN.package.nhanesA} for retrieving the dataset. The continuous outcome is fasting plasma glucose (\texttt{LBXGLU}, mg/dL). Our analysis is restricted throughout to the fasting laboratory subsample, namely the individuals whose records appear in the NHANES glucose file \texttt{GLU\_J}. Hence the survey design that determines eligibility for fasting-lab measurement is conditioned upon, rather than treated as part of the missing-data mechanism. Within this subsample, let $G_i=\texttt{LBXGLU}_i$ denote raw fasting plasma glucose and let
\[
\delta_i = \ind_{\{G_i\ \text{observed}\}}.
\]
Thus the missingness considered here is item nonresponse for fasting plasma glucose within the fasting-lab subsample. Conditioning on the fully observed auxiliary variable $X_i$, we impose the working MAR assumption
\[
\PP(\delta_i = 1 \mid G_i, X_i)
= \PP(\delta_i = 1 \mid X_i)
= \pi(X_i).
\]
The propensities are unknown, so we use the feasible estimators $\smash{\widehat{F}_n}$ and $\smash{\widehat{F}_{n,m}}$ defined in Section~\ref{sec:setup}.

The auxiliary variable $X$ is a discrete 4-level factor formed as the cross of the NHANES exam period indicator \texttt{RIDEXMON} and sex \texttt{RIAGENDR}. In NHANES coding, \texttt{RIDEXMON}~$\in\{1,2\}$ denotes examination period (1: November--April, 2: May--October) and \texttt{RIAGENDR}~$\in\{1,2\}$ denotes sex (1: Male, 2: Female). We map the four cells to
\[
\begin{aligned}
&X=1: \text{November--April, Male}, \\
&X=2: \text{November--April, Female}, \\
&X=3: \text{May--October, Male}, \\
&X=4: \text{May--October, Female}.
\end{aligned}
\]
Accordingly,
\[
\pi(X_i)=
\begin{cases}
\pi_1, & X_i=1,\\
\pi_2, & X_i=2,\\
\pi_3, & X_i=3,\\
\pi_4, & X_i=4,
\end{cases}
\]
so the observation probability is constant within each sex-by-exam-period cell and may vary across cells. In the feasible estimator, it is estimated by the corresponding cellwise observed fraction
\[
\widehat{\pi}(x)=\frac{\sum_{i=1}^n \delta_i \ind_{\{X_i = x\}}}{\sum_{i=1}^n \ind_{\{X_i = x\}}},
\qquad x\in \{1,2,3,4\}.
\]

As explained in Remark~\ref{rem:rescaling}, the Bernstein operator $\mathcal B_m$ is built from the binomial kernel $b_{m,k}(y)$, whose argument lies in $[0,1]$. Thus the method is not limited to intrinsically $[0,1]$-valued responses; for fasting plasma glucose, we therefore map the raw outcome monotonically to $[0,1]$ using
\begin{equation}\label{eq:mapping}
Y_i = \min\left\{\, \max\left\{ \frac{G_i - a}{\,b - a\,},\, 0 \right\},\, 1 \right\},
\qquad (a,b)=(40,460)\ \text{mg/dL}.
\end{equation}
As in Section~\ref{sec:simulations}, the Bernstein degree $m$ is selected by leave-one-out LSCV.

Table~\ref{tab:realdata.cell.propensities} reports the four cells together with their cell sizes and estimated observation probabilities $\widehat{\pi}(x)$. Table~\ref{tab:realdata.feasible} reports the effective sample size after requiring $X$ to be observed, the overall observed fraction $n^{-1} \sum_{i=1}^n \delta_i$, and the selected degree $m^*$.

\begin{table}[H]
\centering
\begin{tabular}{c l c c}
\hline
$x$ & Cell & $n_x$ & $\widehat{\pi}(x)$
 \\
\hline
1 & November--April, Male & 724 & 0.945 \\
2 & November--April, Female & 764 & 0.953 \\
3 & May--October, Male & 740 & 0.955 \\
4 & May--October, Female & 808 & 0.955 \\
\hline
\end{tabular}
\caption{NHANES 2017--2018 fasting-lab subsample: the four sex-by-exam-period cells used for the discrete auxiliary variable $X$, together with their cell sizes and estimated observation probabilities $\widehat{\pi}(x)$.}
\label{tab:realdata.cell.propensities}
\end{table}

\begin{table}[H]
\centering
\begin{tabular}{c c c}
\hline
$n$ & Observed rate (\%) & $m^*$
 \\
\hline
3036 & 95.2 & 516 \\
\hline
\end{tabular}
\caption{NHANES 2017--2018 real-data application (feasible estimator with estimated propensity score) on the fasting-lab subsample. The table reports the subsample size used (with $X$ observed), overall observed rate, and the LSCV-chosen Bernstein degree $m^*$.}
\label{tab:realdata.feasible}
\end{table}

Figure~\ref{fig:feasible_CDFs_nhanes_full} overlays the feasible IPW empirical CDF and its Bernstein-smoothed counterpart on the original glucose scale over the full rescaling interval. This full-range display makes clear how the CDF continues beyond the transformed value $0.40$ and reaches $1$ at the upper endpoint.

For visual detail in the lower part of the distribution, Figure~\ref{fig:feasible_CDFs_nhanes_zoom} shows a zoomed view on the transformed scale. Smoothing yields a monotone, boundary-adaptive CDF with visibly reduced roughness, consistent with the efficiency gains predicted by our theory; the selected $m^*$ balances bias and variance effectively. The workflow could be extended by refining $X$ (for example, adding age groups) or by reporting quantiles back on the original mg/dL scale via the inverse of the monotone rescaling.

\section{Summary and outlook}\label{sec:summary.outlook}

This paper developed a smooth, shape-preserving approach to CDF estimation under MAR by applying the Bernstein operator $\mathcal B_m$ to IPW empirical CDFs. The estimator is monotone and $[0,1]$-valued by construction and exploits the bounded support to mitigate boundary bias. We studied two variants, a pseudo estimator with known propensities and a feasible estimator with propensities estimated nonparametrically from discrete covariates $\bb{X}_1,\ldots,\bb{X}_n$, and we derived pointwise and integrated risk expansions, optimal degrees $m$, and asymptotic normality. A central finding was a strict variance improvement for the feasible estimator, where the asymptotic variance $\sigma^2(y)$ of the pseudo estimator is replaced by $\nu^2(y)=\sigma^2(y)-C(y)$ with $C(y)\geq 0$ (Propositions~\ref{prop:variance.tilde.F.n.m} and \ref{prop:var.hat.F.n.m}). The MSE expansions show the usual $n^{-1}$ variance and $m^{-2}$ bias together with a variance reduction term $-n^{-1} m^{-1/2}$ that rewards moderate smoothing. Our Monte Carlo results and the NHANES application indicate that smoothing improves finite-sample performance.

In practice, the workflow we advocate is straightforward. Map the response variable $Y$ to $[0,1]$ by a monotone transformation when the support is known or can be sensibly capped; estimate $\widehat{\pi}(\bb{X})$ nonparametrically when $\bb{X}$ is discrete, or by kernel methods when $\bb{X}$ is continuous; enforce a small floor $\pi_{\min}$, or trim or stabilize the weights, to control extreme IPW weights from very small propensities; select $m$ by LSCV. One LSCV evaluation has a cost of $\OO(m^2+nm)$ operations, which keeps wall-clock time modest. In our real-data analysis, smoothing added little overhead relative to the unsmoothed IPW curve and produced a visibly less ragged CDF estimate.

Our analysis has limitations that suggest future research directions. The proofs of the variance comparison in Proposition~\ref{prop:var.hat.F.n.m} were written for discrete $\bb{X}$, mainly to avoid treating the discrete and continuous covariate cases in parallel. For continuous covariates, one would instead use a flexible estimator such as the Nadaraya--Watson estimator \citep[see, e.g.,][Eq.~(4)]{MR2569798}; the same asymptotic conclusions hold, but writing out the details requires additional work. Genuinely high-dimensional covariates remain more challenging because stronger assumptions are needed to keep propensities away from~$0$. Since IPW can be unstable when some propensities are very small, simple fixes such as trimming extreme weights, using stabilized weights, or overlap weighting can be applied before smoothing, with only minor changes to the proofs. We studied a univariate response; carrying the same shape-preserving ideas to several dimensions is promising but technically harder; see, e.g., \citet{MR1293514,MR2270097,MR3474765,MR4287788,doi:10.1515/stat-2022-0111}. For inference, we proved pointwise limits. Uniform confidence bands for $F$ and for the induced quantiles will likely require multiplier or bootstrap methods tailored to the smoothed IPW empirical CDF process. It is also natural to pair Bernstein smoothing with CDF estimators that combine outcome modeling and weighting for missingness, and to accommodate survey features such as design weights, calibration, and clustering.

Overall, coupling IPW with Bernstein smoothing yields estimators that are principled, fast, and easy to implement. They respect the support geometry, adapt to boundaries automatically, and dominate unsmoothed IPW empirical CDFs for small to moderate $n$.

\begin{figure}[H]
\centering
\includegraphics[width=0.83\linewidth]{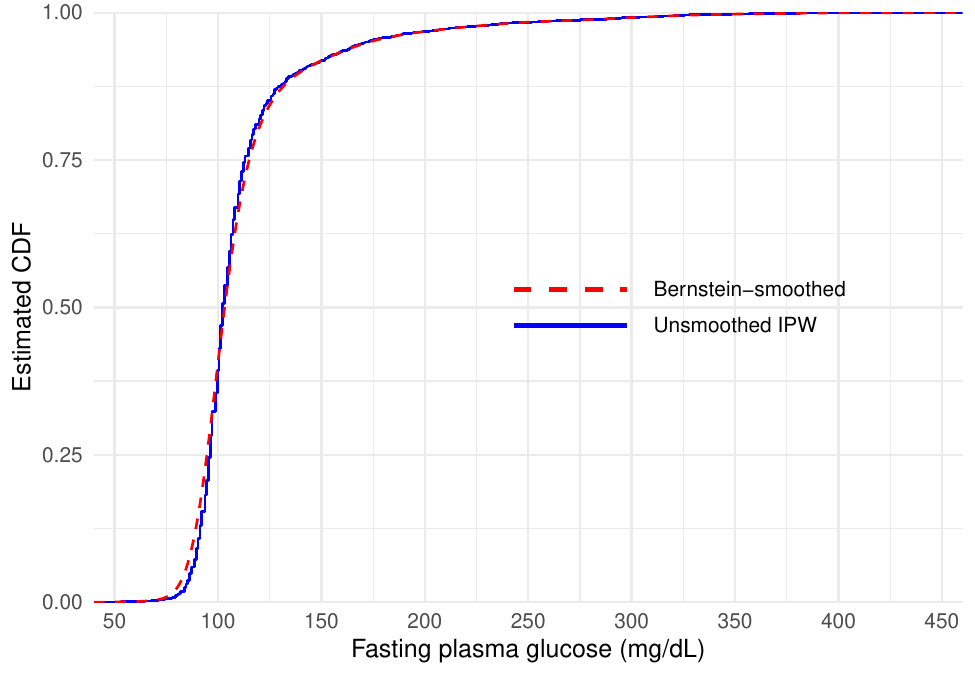}
\caption{Feasible CDF estimates of fasting plasma glucose: unsmoothed IPW versus Bernstein-smoothed (LSCV-chosen degree $m^*$), plotted on the original glucose scale. The full horizontal range $[40,460]$ mg/dL corresponds to the rescaling interval used in the Bernstein step.}
\label{fig:feasible_CDFs_nhanes_full}
\end{figure}

\begin{figure}[H]
\centering
\includegraphics[width=0.83\linewidth]{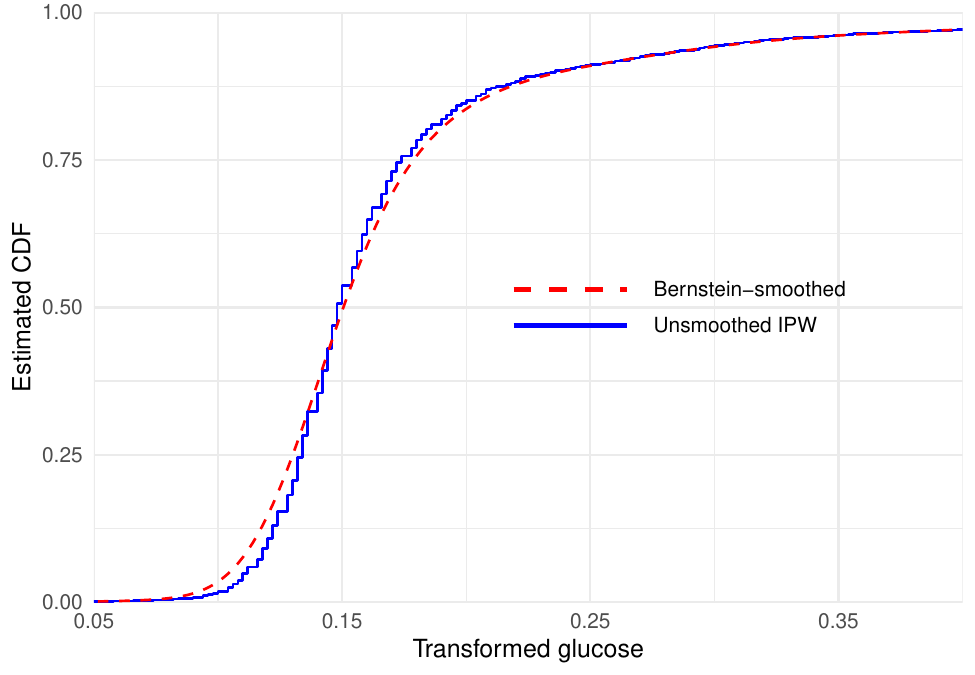}
\caption{Zoomed view of the same feasible CDF estimates on the transformed scale. The horizontal axis only shows $[0.05,0.40]$, which corresponds to approximately $[61,208]$ mg/dL under the mapping $(a,b)=(40,460)$ in \eqref{eq:mapping}.}
\label{fig:feasible_CDFs_nhanes_zoom}
\end{figure}

\section{Proofs}\label{sec:proofs}

\subsection{Proofs of the asymptotic properties of \texorpdfstring{$\smash{\widetilde{F}_{n,m}}$}{tilde{F}\_n,m}}\label{sec:proofs.asymp.tilde.F.n.m}

\begin{proof}[Proof of Proposition~\ref{prop:bias.tilde.F.n.m}]
Let $y\in (0,1)$ be given. By conditioning on $\bb{X}_1$, then using the facts that $\delta_1$ is independent of $Y_1$ conditionally on $\bb{X}_1$ and $\EE[\delta_1 \mid \bb{X}_1] = \pi(\bb{X}_1)$, we have
\begin{equation}\label{eq:prop:bias.tilde.F.n.m.first}
\begin{aligned}
\EE[\widetilde{F}_{n,m}(y)]
&= \sum_{k=0}^m \EE\left[\frac{\delta_1 }{\pi(\bb{X}_1)} \ind_{\{Y_1 \leq k/m\}}\right] b_{m,k}(y) \\
&= \sum_{k=0}^m \EE\left[\EE\left[\frac{\delta_1}{\pi(\bb{X}_1)} \ind_{\{Y_1 \leq k/m\}} \mid \bb{X}_1\right]\right] b_{m,k}(y) \\
&= \sum_{k=0}^m \EE\left[\frac{\EE(\delta_1 \mid \bb{X}_1)}{\pi(\bb{X}_1)} \EE\left[\ind_{\{Y_1 \leq k/m\}}\mid \bb{X}_1\right]\right] b_{m,k}(y) \\
&= \sum_{k=0}^m \EE\left[\ind_{\{Y_1 \leq k/m\}}\right] b_{m,k}(y) \\
&= \sum_{k=0}^m F(k/m) \, b_{m,k}(y).
\end{aligned}
\end{equation}
Next, under Assumption~\ref{ass:3}, a third-order Taylor expansion of $F$ around $y$ yields
\[
F(k/m) = F(y) + f(y) (k/m - y) + \frac{1}{2} f'(y) (k/m - y)^2 + \OO_y(|k/m - y|^3).
\]
Summing over the binomial weights, $b_{m,k}(y)$, and applying the well-known binomial moment formulas \citep[see, e.g.,][p.~24--25]{doi:10.3390/stats4010002},
\[
\begin{aligned}
\sum_{k=0}^m (k/m - y) \, b_{m,k}(y) &= \frac{1}{m} \sum_{k=0}^m (k - m y) \, b_{m,k}(y) = 0, \\
\sum_{k=0}^m (k/m - y)^2 \, b_{m,k}(y) &= \frac{1}{m^2} \sum_{k=0}^m (k - m y)^2 \, b_{m,k}(y) = \frac{y (1 - y)}{m}, \\
\sum_{k=0}^m (k/m - y)^4 \, b_{m,k}(y) &= \frac{1}{m^4} \sum_{k=0}^m (k - m y)^4 \, b_{m,k}(y) = \frac{3m^2 y^2 (y - 1)^2 + m y (1 - y) (6y^2 - 6y + 1)}{m^4},
\end{aligned}
\]
one deduces
\begin{equation}\label{eq:prop:bias.tilde.F.n.m.second}
\mathcal{B}_m(F)(y) = \sum_{k=0}^m F(k/m) \, b_{m,k}(y) = F(y) + \frac{y (1 - y)}{2m} f'(y) + \OO_y(m^{-3/2}).
\end{equation}
The error term in \eqref{eq:prop:bias.tilde.F.n.m.second} is a consequence of the Cauchy-Schwarz inequality:
\[
\begin{aligned}
\sum_{k=0}^m |k/m - y|^3 \, b_{m,k}(y)
&\leq \left(\sum_{k=0}^m (k/m - y)^4 \, b_{m,k}(y)\right)^{1/2} \left(\sum_{k=0}^m (k/m - y)^2 \, b_{m,k}(y)\right)^{1/2} \\
&= \left(\frac{3m^2 y^2 (y - 1)^2 + m y (1 - y) (6y^2 - 6y + 1)}{m^4}\right)^{1/2} \left(\frac{y (1 - y)}{m}\right)^{1/2} \\[2mm]
&= \OO(m^{-3/2}).
\end{aligned}
\]
The error term depends on $y$ in \eqref{eq:prop:bias.tilde.F.n.m.second} because $f''$ may not be bounded on $(0,1)$. This concludes the proof.
\end{proof}

\begin{proof}[Proof of Proposition~\ref{prop:variance.tilde.F.n.m}]
Let $y\in (0,1)$ be given. Because of \eqref{eq:prop:bias.tilde.F.n.m.first}, we can write
\begin{equation}\label{eq:asymp.normality.decomp}
\widetilde{F}_{n,m}(y) - \EE[\widetilde{F}_{n,m}(y)] = \widetilde{F}_{n,m}(y) - \mathcal{B}_m(F)(y) = n^{-1} \sum_{i=1}^n Z_{i,m},
\end{equation}
where
\[
Z_{i,m} = \sum_{k=0}^m \frac{\delta_i}{\pi(\bb{X}_i)} \ind_{\{Y_i \leq k/m\}} b_{m,k}(y) - \mathcal{B}_m(F)(y).
\]
Given that $Z_{1,m},\ldots,Z_{n,m}$ are i.i.d.\ and centered, and $\delta_1^2 = \delta_1$, we deduce that
\[
\begin{aligned}
\Var(\widetilde{F}_{n,m}(y))
&= n^{-1} \, \Var(Z_{1,m}) \\
&= n^{-1} \left\{\sum_{k,\ell=0}^m \EE\left[\frac{\delta_1}{\pi(\bb{X}_1)^2} \ind_{\{Y_1 \leq (k \wedge \ell)/m\}}\right] b_{m,k}(y) \, b_{m,\ell}(y)\right\} - n^{-1} \{\mathcal{B}_m(F)(y)\}^2,
\end{aligned}
\]
where $a\wedge b \leqdef \min(a,b)$. Moreover, $\delta_1$ is independent of $Y_1$ conditionally on $\bb{X}_1$, so we have
\[
\begin{aligned}
\EE\left[\frac{\delta_1}{\pi(\bb{X}_1)^2} \ind_{\{Y_1 \leq (k \wedge \ell)/m\}}\right]
&= \EE\left[\frac{\EE[\delta_1 \mid \bb{X}_1]}{\pi(\bb{X}_1)^2} \EE\left[\ind_{\{Y_1 \leq (k \wedge \ell)/m\}} \mid \bb{X}_1\right]\right] \\
&= \EE\left[\frac{1}{\pi(\bb{X}_1)} \EE\left[\ind_{\{Y_1 \leq (k \wedge \ell)/m\}} \mid \bb{X}_1\right]\right] \\
&= \EE\left[\frac{1}{\pi(\bb{X}_1)} F_{Y_1 \mid \bb{X}_1}((k \wedge \ell)/m)\right].
\end{aligned}
\]
Next, under Assumption~\ref{ass:4}, a second-order Taylor expansion of $F_{Y_1 \mid \bb{X}_1}$ around $y$ yields
\[
F_{Y_1 \mid \bb{X}_1}((k \wedge \ell)/m) = F_{Y_1 \mid \bb{X}_1}(y) + f_{Y_1 \mid \bb{X}_1}(y) ((k\wedge \ell)/m - y) + \OO_y(|(k\wedge \ell)/m - y|^2).
\]
Putting the last three equations together with \eqref{eq:prop:bias.tilde.F.n.m.second} yields
\[
\begin{aligned}
\Var(\widetilde{F}_{n,m}(y))
&= n^{-1} \, \EE\left[\frac{F_{Y_1 \mid \bb{X}_1}(y)}{\pi(\bb{X}_1)} - \{F(y)\}^2 + \frac{f_{Y_1 \mid \bb{X}_1}(y)}{\pi(\bb{X}_1)} \sum_{k,\ell=0}^m ((k\wedge \ell)/m - y) \, b_{m,k}(y) \, b_{m,\ell}(y)\right] \\
&\qquad+ \OO_y(n^{-1} m^{-1}),
\end{aligned}
\]
where the error term also implicitly uses Assumption~\ref{ass:1} to absorb $1/\pi(\bb{X}_1)$ times the Taylor series remainder in the expectation. By Lemma 4 of \citet{MR4287788}, it is known that
\[
\sum_{k,\ell=0}^m ((k\wedge \ell)/m - y) \, b_{m,k}(y) \, b_{m,\ell}(y) = m^{-1/2} \left\{-\sqrt{\frac{y (1-y)}{\pi}} + \oo_y(1)\right\}.
\]
Therefore,
\[
\begin{aligned}
\Var(\widetilde{F}_{n,m}(y))
&= n^{-1} \, \left(\EE\left[\frac{F_{Y_1 \mid \bb{X}_1}(y)}{\pi(\bb{X}_1)}\right] - \{F(y)\}^2\right) - n^{-1} m^{-1/2} \sqrt{\frac{y(1-y)}{\pi}} \EE\left[\frac{f_{Y_1 \mid \bb{X}_1}(y)}{\pi(\bb{X}_1)}\right] \\
&\quad+ \oo_y(n^{-1} m^{-1/2}).
\end{aligned}
\]
This concludes the proof.
\end{proof}

\begin{proof}[Proof of Theorem~\ref{thm:tilde.F.n.m.asymp.normality}]
Let $y\in (0,1)$. We decompose the standardized pseudo estimator as follows:
\begin{equation}\label{eq:proof.normality.decomp.tilde}
n^{1/2} \{\widetilde{F}_{n,m}(y) - F(y)\} = n^{1/2} \{\widetilde{F}_{n,m}(y) - \EE[\widetilde{F}_{n,m}(y)]\} + n^{1/2} \{\EE[\widetilde{F}_{n,m}(y)] - F(y)\}.
\end{equation}

The second term is the scaled bias. By Proposition~\ref{prop:bias.tilde.F.n.m} (under Assumptions~\ref{ass:1}~and~\ref{ass:3}),
\begin{equation}\label{eq:bias.part.tilde.F.n.m.asymp.normality}
n^{1/2} \{\EE[\widetilde{F}_{n,m}(y)] - F(y)\} = n^{1/2} \{m^{-1} B(y) + \oo_y(m^{-1})\} = (n^{1/2} m^{-1}) B(y) + \oo_y(n^{1/2} m^{-1}).
\end{equation}
This term converges to $0$ if $n^{1/2} m^{-1} \to 0$, and to $\lambda B(y)$ if $n^{1/2} m^{-1} \to \lambda$.

The first term on the right-hand side of \eqref{eq:proof.normality.decomp.tilde} is the stochastic component. As shown in the proof of Proposition~\ref{prop:variance.tilde.F.n.m} (Eq.~\eqref{eq:asymp.normality.decomp}),
\[
n^{1/2} \{\widetilde{F}_{n,m}(y) - \EE[\widetilde{F}_{n,m}(y)]\} = n^{-1/2} \sum_{i=1}^n Z_{i,m},
\]
where
\begin{equation}\label{eq:Z.i}
Z_{i,m} = \sum_{k=0}^m \frac{\delta_i}{\pi(\bb{X}_i)} \ind_{\{Y_i \leq k/m\}} b_{m,k}(y) - \mathcal{B}_m(F)(y), \quad i\in \{1,\ldots,n\},
\end{equation}
are i.i.d.\ centered random variables. We apply the Lindeberg--Feller central limit theorem for double arrays; see, e.g., \citet[Section~1.9.3]{MR0595165}. By Proposition~\ref{prop:variance.tilde.F.n.m} (under Assumptions~\ref{ass:1}, \ref{ass:3}, and \ref{ass:4}),
\begin{equation}\label{eq:asymp.norm.conv.var}
\Var(Z_{1,m}) = n \, \Var(\widetilde{F}_{n,m}(y)) \to \sigma^2(y) > 0, \quad \text{as $n\to\infty$ (and $m\to\infty$).}
\end{equation}

We verify the Lindeberg condition for $Z_{1,m}$: for every $\varepsilon > 0$,
\begin{equation}\label{eq:lindeberg.tilde}
\lim_{n\to\infty} \frac{1}{\Var(Z_{1,m})} \EE\left[ Z_{1,m}^2 \ind_{\{|Z_{1,m}|^2\, > \, \varepsilon \hspace{0.2mm} n \, \Var(Z_{1,m})\}} \right] = 0.
\end{equation}
We check whether $Z_{1,m}$ is bounded. Since $\pi(\bb{X}_i) \geq \pi_{\min} > 0$ for all $i\in \{1,\ldots,n\}$ by Assumption~\ref{ass:1}, and since $\sum_{k=0}^m b_{m,k}(y) = 1$ and $0 \leq \mathcal{B}_m(F)(y) \leq 1$, we have
\[
|Z_{1,m}| \leq \frac{1}{\pi_{\min}} \sum_{k=0}^m b_{m,k}(y) + |\mathcal{B}_m(F)(y)| \leq \frac{1}{\pi_{\min}} + 1 < \infty.
\]
Given \eqref{eq:asymp.norm.conv.var}, the indicator $\smash{\ind_{\{|Z_{1,m}|^2\, > \, \varepsilon \hspace{0.2mm} n \, \Var(Z_{1,m})\}}}$ is equal to zero almost surely for $n$ sufficiently large, and the Lindeberg condition \eqref{eq:lindeberg.tilde} is satisfied. The conclusion follows.
\end{proof}

\subsection{Proofs of the asymptotic properties of \texorpdfstring{$\smash{\widehat{F}_{n,m}}$}{hat{F}\_n,m}}\label{sec:proofs.asymp.hat.F.n.m}

\begin{proof}[Proof of Proposition~\ref{prop:bias.hat.F.n.m}]
Let $y\in (0,1)$ be given. First, note that a Taylor expansion of $1/\hat{\pi}_i(\bb{X}_{1:n})$ around $1/\pi(\bb{X}_i)$ yields
\[
\frac{1}{\hat{\pi}_i(\bb{X}_{1:n})} = \frac{1}{\pi(\bb{X}_i)} - \frac{1}{\pi(\bb{X}_i)^2} (\hat{\pi}_i(\bb{X}_{1:n}) - \pi(\bb{X}_i)) + \sum_{j=2}^{\infty} \frac{(-1)^j}{\pi(\bb{X}_i)^{j+1}} (\hat{\pi}_i(\bb{X}_{1:n}) - \pi(\bb{X}_i))^j.
\]
From \eqref{eq:estimators}, it follows that
\begin{equation}\label{eq:hat.F.n.m.vs.tilde.F.n.m}
\begin{aligned}
\widehat{F}_{n,m}(y)
&= \widetilde{F}_{n,m}(y) - n^{-1} \sum_{k=0}^m \sum_{i=1}^n \frac{\delta_i}{\pi(\bb{X}_i)^2}(\hat{\pi}_i(\bb{X}_{1:n})-\pi(\bb{X}_i)) \ind_{\{Y_i \leq k/m\}} b_{m,k}(y) \\
&\hspace{20mm}+ n^{-1} \sum_{j=2}^{\infty} \sum_{k=0}^m \sum_{i=1}^n \frac{(-1)^j \delta_i}{\pi(\bb{X}_i)^{j+1}}(\hat{\pi}_i(\bb{X}_{1:n})-\pi(\bb{X}_i))^j \ind_{\{Y_i \leq k/m\}} b_{m,k}(y).
\end{aligned}
\end{equation}
If $p$ denotes the marginal probability mass function of each $\bb{X}_i$, and
\[
\hat{p}(\bb{x}) = n^{-1} \sum_{k=1}^n \ind_{\{\bb{X}_k = \bb{x}\}}
\]
denotes the corresponding empirical estimator, then \eqref{eq:hat.pi} implies
\[
\hat{\pi}_i(\bb{X}_{1:n}) - \pi(\bb{X}_i)
= \frac{n^{-1} \sum_{j=1}^n (\delta_j - \pi(\bb{X}_i)) \, \ind_{\{\bb{X}_j = \bb{X}_i\}}}{\hat{p}(\bb{X}_i)}.
\]
A Taylor expansion of $1/\hat{p}(\bb{X}_i)$ around $1/p(\bb{X}_i)$ yields
\[
\begin{aligned}
\frac{1}{\hat{p}(\bb{X}_i)}
&= \frac{1}{p(\bb{X}_i)} - \frac{1}{p(\bb{X}_i)^2} (\hat{p}(\bb{X}_i) - p(\bb{X}_i)) + \sum_{\ell=2}^{\infty} \frac{(-1)^{\ell}}{p(\bb{X}_i)^{\ell+1}} (\hat{p}(\bb{X}_i) - p(\bb{X}_i))^{\ell}.
\end{aligned}
\]
We substitute this expansion into the previous equation and evaluate term by term:
\begin{equation}\label{eq:prop:bias.hat.F.n.m.second}
\begin{aligned}
\hat{\pi}_i(\bb{X}_{1:n}) - \pi(\bb{X}_i)
&= \frac{n^{-1} \sum_{j=1}^n (\delta_j - \pi(\bb{X}_i)) \, \ind_{\{\bb{X}_j = \bb{X}_i\}}}{p(\bb{X}_i)} \\
&\quad- \frac{n^{-1} \sum_{j=1}^n (\delta_j - \pi(\bb{X}_i)) \, \ind_{\{\bb{X}_j = \bb{X}_i\}}}{p(\bb{X}_i)^2} (\hat{p}(\bb{X}_i) - p(\bb{X}_i)) \\[-1mm]
&\quad+ n^{-1} \sum_{\ell=2}^{\infty} \sum_{j=1}^n \frac{(-1)^{\ell}}{p(\bb{X}_i)^{\ell+1}} (\delta_j - \pi(\bb{X}_i)) \, \ind_{\{\bb{X}_j = \bb{X}_i\}} (\hat{p}(\bb{X}_i) - p(\bb{X}_i))^{\ell}.
\end{aligned}
\end{equation}
The first term on the right-hand side of \eqref{eq:prop:bias.hat.F.n.m.second} is $\OO_{L^2}(n^{-1/2})$. Indeed, we have
\[
\begin{aligned}
&\EE\left[\frac{n^{-1} \sum_{j=1}^n (\delta_j - \pi(\bb{X}_i)) \, \ind_{\{\bb{X}_j = \bb{X}_i\}}}{p(\bb{X}_i)} \mid \bb{X}_{1:n}\right] \\
&\quad= \EE\left[\frac{n^{-1} \sum_{j=1}^n \EE[(\delta_j - \pi(\bb{X}_i)) \mid \bb{X}_{1:n}] \, \ind_{\{\bb{X}_j = \bb{X}_i\}}}{p(\bb{X}_i)}\right]
= 0,
\end{aligned}
\]
and
\[
\begin{aligned}
&\Var\left(\frac{n^{-1} \sum_{j=1}^n (\delta_j - \pi(\bb{X}_i)) \, \ind_{\{\bb{X}_j = \bb{X}_i\}}}{p(\bb{X}_i)} \mid \bb{X}_{1:n}\right)
= \frac{n^{-2} \sum_{j=1}^n \Var(\delta_j \mid \bb{X}_{1:n}) \, \ind_{\{\bb{X}_j = \bb{X}_i\}}}{p(\bb{X}_i)^2}
\ll \frac{n^{-1}}{p_{\min}^2},
\end{aligned}
\]
where $p_{\min} = \min_{\bb{x}} p(\bb{x}) > 0$ by Assumption~\ref{ass:2}. Hence, using the conditional variance formula,
\[
\begin{aligned}
&\Var\left(\frac{n^{-1} \sum_{j=1}^n (\delta_j - \pi(\bb{X}_i)) \, \ind_{\{\bb{X}_j = \bb{X}_i\}}}{p(\bb{X}_i)}\right) \\
&\quad= \EE\left[\Var\left(\frac{n^{-1} \sum_{j=1}^n (\delta_j - \pi(\bb{X}_i)) \, \ind_{\{\bb{X}_j = \bb{X}_i\}}}{p(\bb{X}_i)} \mid \bb{X}_{1:n}\right)\right] + 0
\ll \frac{n^{-1}}{p_{\min}^2},
\end{aligned}
\]
and thus
\[
\EE\left[\left(\frac{n^{-1} \sum_{j=1}^n (\delta_j - \pi(\bb{X}_i)) \, \ind_{\{\bb{X}_j = \bb{X}_i\}}}{p(\bb{X}_i)}\right)^2\right] \ll \frac{n^{-1}}{p_{\min}^2}.
\]
By Cauchy--Schwarz, the second moment of the second term on the right-hand side of \eqref{eq:prop:bias.hat.F.n.m.second} satisfies
\[
\begin{aligned}
&\EE\left[\left(\frac{n^{-1} \sum_{j=1}^n (\delta_j - \pi(\bb{X}_i)) \, \ind_{\{\bb{X}_j = \bb{X}_i\}}}{p(\bb{X}_i)^2} (\hat{p}(\bb{X}_i) - p(\bb{X}_i))\right)^2\right] \\
&\ll \frac{1}{p_{\min}^4} \sqrt{\EE\left[\left(n^{-1} \sum_{j=1}^n (\delta_j - \pi(\bb{X}_i)) \, \ind_{\{\bb{X}_j = \bb{X}_i\}}\right)^4\right]} \sqrt{\EE\left[(\hat{p}(\bb{X}_i) - p(\bb{X}_i))^4\right]} \\
&\ll \frac{n^{-2}}{p_{\min}^4},
\end{aligned}
\]
(each square root being $\ll n^{-1}$) so that
\begin{equation}\label{eq:tail.control.1}
\frac{n^{-1} \sum_{j=1}^n (\delta_j - \pi(\bb{X}_i)) \, \ind_{\{\bb{X}_j = \bb{X}_i\}}}{p(\bb{X}_i)^2} (\hat{p}(\bb{X}_i) - p(\bb{X}_i)) = \OO_{L^2}(n^{-1}),
\end{equation}
where the constant implicit in the error term depends on $p_{\min}$. Similarly, the residual tail sum in \eqref{eq:prop:bias.hat.F.n.m.second} satisfies
\begin{equation}\label{eq:tail.control.2}
n^{-1} \sum_{\ell=2}^{\infty} \sum_{j=1}^n \frac{(-1)^{\ell}}{p(\bb{X}_i)^{\ell+1}} (\delta_j - \pi(\bb{X}_i)) \, \ind_{\{\bb{X}_j = \bb{X}_i\}} (\hat{p}(\bb{X}_i) - p(\bb{X}_i))^{\ell} = \OO_{L^2}(n^{-1}),
\end{equation}
so that
\begin{equation}\label{eq:hat.pi.vs.pi.L2}
\EE\left[(\hat{\pi}_i(\bb{X}_{1:n}) - \pi(\bb{X}_i))^2\right] = \OO(n^{-1}).
\end{equation}
By applying H\"older's inequality to each $j$-summand in \eqref{eq:hat.F.n.m.vs.tilde.F.n.m}, one deduces that the expectation of the last term in \eqref{eq:hat.F.n.m.vs.tilde.F.n.m} satisfies
\begin{equation}\label{eq:prop:bias.hat.F.n.m.tail.estimate}
\EE\left[n^{-1} \sum_{j=2}^{\infty} \sum_{k=0}^m \sum_{i=1}^n \frac{(-1)^j \delta_i}{\pi(\bb{X}_i)^{j+1}}(\hat{\pi}_i(\bb{X}_{1:n})-\pi(\bb{X}_i))^j \ind_{\{Y_i \leq k/m\}} b_{m,k}(y)\right] = \OO(n^{-1}),
\end{equation}
where the constant implicit in the error term depends on $p_{\min}$ and $\pi_{\min}$.

It remains to evaluate the expectation of the first term on the right-hand side of \eqref{eq:hat.F.n.m.vs.tilde.F.n.m}. To do so, consider the decomposition:
\begin{equation}\label{eq:prop:bias.hat.F.n.m.third}
\begin{aligned}
&n^{-1} \sum_{k=0}^m \sum_{i=1}^n \frac{\delta_i}{\pi(\bb{X}_i)^2}(\hat{\pi}_i(\bb{X}_{1:n})-\pi(\bb{X}_i)) \ind_{\{Y_i \leq k/m\}} b_{m,k}(y) \\
&\quad= n^{-1} \sum_{k=0}^m \sum_{i=1}^n \frac{(\delta_i - \pi(\bb{X}_i))}{\pi(\bb{X}_i)^2}(\hat{\pi}_i(\bb{X}_{1:n})-\pi(\bb{X}_i)) \ind_{\{Y_i \leq k/m\}} b_{m,k}(y) \\
&\qquad+ n^{-1} \sum_{k=0}^m \sum_{i=1}^n \frac{1}{\pi(\bb{X}_i)}(\hat{\pi}_i(\bb{X}_{1:n})-\pi(\bb{X}_i)) \ind_{\{Y_i \leq k/m\}} b_{m,k}(y).
\end{aligned}
\end{equation}
For the first term on the right-hand side of \eqref{eq:prop:bias.hat.F.n.m.third}, note that, for all $i\in \{1,\ldots,n\}$, we have
\[
\EE\left[(\delta_i - \pi(\bb{X}_i))^2 \mid \bb{X}_{1:n}\right] = \Var(\delta_i \mid \bb{X}_{1:n}) = \pi(\bb{X}_i) (1 - \pi(\bb{X}_i)),
\]
and, for all $j\in \{1,\ldots,n\}\backslash\{i\}$,
\[
\begin{aligned}
&\EE\big[(\delta_i - \pi(\bb{X}_i)) (\delta_j - \pi(\bb{X}_i)) \ind_{\{\bb{X}_j = \bb{X}_i\}} \mid \bb{X}_{1:n}\big] \\
&\quad= \EE\left[(\delta_i - \pi(\bb{X}_i)) \mid \bb{X}_{1:n}\right] \EE\left[(\delta_j - \pi(\bb{X}_i)) \mid \bb{X}_{1:n}\right] \ind_{\{\bb{X}_j = \bb{X}_i\}} = 0.
\end{aligned}
\]
Hence, recalling that $\hat{\pi}_i(\bb{X}_{1:n}) - \pi(\bb{X}_i) = n^{-1} \sum_{j=1}^n (\delta_j - \pi(\bb{X}_i)) \, \ind_{\{\bb{X}_j = \bb{X}_i\}} / \hat{p}(\bb{X}_i)$, we deduce
\[
\begin{aligned}
&\EE\left[\frac{(\delta_i - \pi(\bb{X}_i))}{\pi(\bb{X}_i)^2}(\hat{\pi}_i(\bb{X}_{1:n})-\pi(\bb{X}_i)) \mid \bb{X}_{1:n}\right] \\[1mm]
&\quad= \EE\left[\frac{\EE\left[(\delta_i - \pi(\bb{X}_i))^2 \mid \bb{X}_{1:n}\right]}{n \, \pi(\bb{X}_i)^2 \hat{p}(\bb{X}_i)}\right] + \sum_{\substack{j=1 \\ j\neq i}}^n \EE\left[\frac{\EE\big[(\delta_i - \pi(\bb{X}_i)) (\delta_j - \pi(\bb{X}_i)) \ind_{\{\bb{X}_j = \bb{X}_i\}} \mid \bb{X}_{1:n}\big]}{n \, \pi(\bb{X}_i)^2 \hat{p}(\bb{X}_i)}\right] \\[-2mm]
&\quad= \EE\left[\frac{\pi(\bb{X}_i) (1 - \pi(\bb{X}_i))}{n \, \pi(\bb{X}_i)^2 \hat{p}(\bb{X}_i)}\right] + 0 \\[1mm]
&\quad= \OO(n^{-1}),
\end{aligned}
\]
where the constant implicit in the error term depends on $p_{\min}$ and $\pi_{\min}$. Given that $\delta_i$ and $Y_i$ are independent conditionally on $\bb{X}_i$, it follows that
\[
\begin{aligned}
&\EE\left[n^{-1} \sum_{k=0}^m \sum_{i=1}^n \frac{(\delta_i - \pi(\bb{X}_i))}{\pi(\bb{X}_i)^2}(\hat{\pi}_i(\bb{X}_{1:n})-\pi(\bb{X}_i)) \ind_{\{Y_i \leq k/m\}} b_{m,k}(y)\right] \\[1mm]
&\quad= \EE\left[n^{-1} \sum_{k=0}^m \sum_{i=1}^n \EE\left[\frac{(\delta_i - \pi(\bb{X}_i))}{\pi(\bb{X}_i)^2}(\hat{\pi}_i(\bb{X}_{1:n})-\pi(\bb{X}_i)) \mid \bb{X}_{1:n}\right] \EE[\ind_{\{Y_i \leq k/m\}} \mid \bb{X}_{1:n}] \, b_{m,k}(y)\right] \\[1mm]
&\quad= \OO(n^{-1}).
\end{aligned}
\]
For the second term on the right-hand side of \eqref{eq:prop:bias.hat.F.n.m.third}, note that $\EE[(\delta_j - \pi(\bb{X}_i)) \mid \bb{X}_{1:n}] \, \ind_{\{\bb{X}_j = \bb{X}_i\}} = 0$ for all $i,j\in \{1,\ldots,n\}$, so that \eqref{eq:prop:bias.hat.F.n.m.second} implies $\EE[\hat{\pi}_i(\bb{X}_{1:n})-\pi(\bb{X}_i) \mid \bb{X}_{1:n}] = 0$, and thus
\[
\begin{aligned}
&\EE\left[n^{-1} \sum_{k=0}^m \sum_{i=1}^n \frac{1}{\pi(\bb{X}_i)}(\hat{\pi}_i(\bb{X}_{1:n})-\pi(\bb{X}_i)) \ind_{\{Y_i \leq k/m\}} b_{m,k}(y)\right] \\
&\quad= \EE\left[n^{-1} \sum_{k=0}^m \sum_{i=1}^n \frac{1}{\pi(\bb{X}_i)} \EE\left[\hat{\pi}_i(\bb{X}_{1:n})-\pi(\bb{X}_i) \mid \bb{X}_{1:n}\right] \EE[\ind_{\{Y_i \leq k/m\}} \mid \bb{X}_{1:n}] \, b_{m,k}(y)\right] \\
&\quad= 0.
\end{aligned}
\]
Putting the last two equations together in \eqref{eq:prop:bias.hat.F.n.m.third} yields
\[
\EE\left[n^{-1} \sum_{k=0}^m \sum_{i=1}^n \frac{\delta_i}{\pi(\bb{X}_i)^2}(\hat{\pi}_i(\bb{X}_{1:n})-\pi(\bb{X}_i)) \ind_{\{Y_i \leq k/m\}} b_{m,k}(y)\right] = \OO(n^{-1}).
\]
Combined with \eqref{eq:prop:bias.hat.F.n.m.tail.estimate} and the decomposition \eqref{eq:hat.F.n.m.vs.tilde.F.n.m}, the conclusion follows.
\end{proof}

\begin{proof}[Proof of Proposition~\ref{prop:var.hat.F.n.m}]
Let $y\in (0,1)$ be given. We start by looking at the second term on the right-hand side of \eqref{eq:hat.F.n.m.vs.tilde.F.n.m} and we decompose it as in \eqref{eq:prop:bias.hat.F.n.m.third}. The first term on the right-hand side of \eqref{eq:prop:bias.hat.F.n.m.third} is $\OO_{L^2}(n^{-1/2})$:
\[
\begin{aligned}
&\EE\left[\left(n^{-1} \sum_{k=0}^m \sum_{i=1}^n \frac{(\delta_i - \pi(\bb{X}_i))}{\pi(\bb{X}_i)^2}(\hat{\pi}_i(\bb{X}_{1:n})-\pi(\bb{X}_i)) \ind_{\{Y_i \leq k/m\}} b_{m,k}(y)\right)^2\right] \\
&\quad\leq n^{-2} \sum_{k,\ell=0}^m \sum_{i,j=1}^n \EE\left[\left|\frac{(\delta_i - \pi(\bb{X}_i))}{\pi(\bb{X}_i)^2}(\hat{\pi}_i(\bb{X}_{1:n})-\pi(\bb{X}_i)) \ind_{\{Y_i \leq k/m\}}\right| \right. \\
&\hspace{40mm} \left.\left|\frac{(\delta_j - \pi(\bb{X}_j))}{\pi(\bb{X}_j)^2}(\hat{\pi}_j(\bb{X}_{1:n})-\pi(\bb{X}_j)) \ind_{\{Y_j \leq \ell/m\}}\right|\right] b_{m,k}(y) \, b_{m,\ell}(y) \\
&\quad\ll \frac{n^{-2}}{\pi_{\min}^4} \sum_{k,\ell=0}^m \sum_{i,j=1}^n \EE\left[\left|\hat{\pi}_i(\bb{X}_{1:n})-\pi(\bb{X}_i)\right| \left|\hat{\pi}_j(\bb{X}_{1:n})-\pi(\bb{X}_j)\right|\right] b_{m,k}(y) \, b_{m,\ell}(y) \\
&\quad\ll \frac{n^{-2}}{\pi_{\min}^4} \sum_{k,\ell=0}^m \sum_{i,j=1}^n \sqrt{\EE\left[\left|\hat{\pi}_i(\bb{X}_{1:n})-\pi(\bb{X}_i)\right|^2\right]} \sqrt{\EE\left[\left|\hat{\pi}_j(\bb{X}_{1:n})-\pi(\bb{X}_j)\right|^2\right]} b_{m,k}(y) \, b_{m,\ell}(y) \\
&\quad\ll \frac{n^{-1}}{\pi_{\min}^4},
\end{aligned}
\]
where the last bound follows from \eqref{eq:hat.pi.vs.pi.L2}. For the second term on the right-hand side of \eqref{eq:prop:bias.hat.F.n.m.third}, we have, using \eqref{eq:prop:bias.hat.F.n.m.second},
\[
\begin{aligned}
&n^{-1} \sum_{k=0}^m \sum_{i=1}^n \frac{1}{\pi(\bb{X}_i)}(\hat{\pi}_i(\bb{X}_{1:n})-\pi(\bb{X}_i)) \ind_{\{Y_i \leq k/m\}} b_{m,k}(y) \\
&\quad= n^{-2} \sum_{k=0}^m \sum_{i,j=1}^n \frac{(\delta_j - \pi(\bb{X}_i))}{\pi(\bb{X}_i)} \frac{\ind_{\{\bb{X}_j = \bb{X}_i\}}}{p(\bb{X}_i)} \ind_{\{Y_i \leq k/m\}} b_{m,k}(y) \\
&\qquad+ n^{-2} \sum_{\ell=1}^{\infty} \sum_{k=0}^m \sum_{i,j=1}^n \frac{(-1)^\ell}{p(\bb{X}_i)^{\ell+1}} \frac{(\delta_j - \pi(\bb{X}_i))}{\pi(\bb{X}_i)} \ind_{\{\bb{X}_j = \bb{X}_i\}} (\hat{p}(\bb{X}_i) - p(\bb{X}_i))^{\ell} \ind_{\{Y_i \leq k/m\}} b_{m,k}(y).
\end{aligned}
\]
By combining \eqref{eq:tail.control.1} and \eqref{eq:tail.control.2} in the proof of Proposition~\ref{prop:bias.hat.F.n.m}, the second term on the right-hand side is $\OO_{L^2}(n^{-1})$. The first term on the right-hand side is equal to
\[
n^{-1} \sum_{k=0}^m \sum_{i=1}^n \frac{(\delta_i - \pi(\bb{X}_i))}{\pi(\bb{X}_i)} F_{Y_i \mid \bb{X}_i}(k/m) \, b_{m,k}(y) + \OO_{L^2}(n^{-1}),
\]
by the law of large numbers in $L^2$. Putting all the above together shows that
\begin{equation}\label{eq:prop:var.hat.F.n.m.second}
\begin{aligned}
&n^{-1} \sum_{k=0}^m \sum_{i=1}^n \frac{\delta_i}{\pi(\bb{X}_i)^2}(\hat{\pi}_i(\bb{X}_{1:n})-\pi(\bb{X}_i)) \ind_{\{Y_i \leq k/m\}} b_{m,k}(y) \\
&\quad= n^{-1} \sum_{k=0}^m \sum_{i=1}^n \frac{(\delta_i - \pi(\bb{X}_i))}{\pi(\bb{X}_i)} F_{Y_i \mid \bb{X}_i}(k/m) \, b_{m,k}(y) + \OO_{L^2}(n^{-1}).
\end{aligned}
\end{equation}

Using the decomposition \eqref{eq:hat.F.n.m.vs.tilde.F.n.m} and the definition of the pseudo estimator $\smash{\widetilde{F}_{n,m}}$ in \eqref{eq:estimators}, we find
\begin{align}\label{eq:expansion.prop.7}
\widehat{F}_{n,m}(y)
&= \widetilde{F}_{n,m}(y) - n^{-1} \sum_{k=0}^m \sum_{i=1}^n \frac{(\delta_i - \pi(\bb{X}_i))}{\pi(\bb{X}_i)} F_{Y_i \mid \bb{X}_i}(k/m) \, b_{m,k}(y) + \OO_{L^2}(n^{-1}) \notag \\
&= n^{-1} \sum_{i=1}^n \frac{\delta_i}{\pi(\bb{X}_i)} \sum_{k=0}^m \ind_{\{Y_i \leq k/m\}} b_{m,k}(y) - n^{-1} \sum_{i=1}^n \frac{(\delta_i - \pi(\bb{X}_i))}{\pi(\bb{X}_i)} \sum_{k=0}^m F_{Y_i \mid \bb{X}_i}(k/m) \, b_{m,k}(y) \notag \\
&\quad+ \OO_{L^2}(n^{-1}) \notag \\
&\reqdef A_{n,m} - B_{n,m} + \OO_{L^2}(n^{-1}).
\end{align}
To obtain the variance of $\widehat{F}_{n,m}(y)$, it remains to compute $\Var(B_{n,m})$ and $\Cov(A_{n,m},B_{n,m})$, given that we already have the asymptotics of $\Var(A_{n,m})$ from Proposition~\ref{prop:variance.tilde.F.n.m}. Since $\delta_i$ and $Y_i$ are independent conditionally on $\bb{X}_i$, we have
\[
\begin{aligned}
\Var(B_{n,m})
&= n^{-2} \sum_{i=1}^n \Var\left(\frac{(\delta_i - \pi(\bb{X}_i))}{\pi(\bb{X}_i)} \sum_{k=0}^m F_{Y_i \mid \bb{X}_i}(k/m) \, b_{m,k}(y)\right) \\
&= n^{-2} \sum_{i=1}^n \EE\left[\Var\left(\frac{(\delta_i - \pi(\bb{X}_i))}{\pi(\bb{X}_i)} \sum_{k=0}^m F_{Y_i \mid \bb{X}_i}(k/m) \, b_{m,k}(y) \mid \bb{X}_i\right)\right] \\
&\quad+ n^{-2} \sum_{i=1}^n \Var\left(\EE\left[\frac{(\delta_i - \pi(\bb{X}_i))}{\pi(\bb{X}_i)} \sum_{k=0}^m F_{Y_i \mid \bb{X}_i}(k/m) \, b_{m,k}(y) \mid \bb{X}_i\right]\right) \\
&= n^{-2} \sum_{i=1}^n \EE\left[\frac{\Var(\delta_i \mid \bb{X}_i)}{\pi(\bb{X}_i)^2} \left(\sum_{k=0}^m F_{Y_i \mid \bb{X}_i}(k/m) \, b_{m,k}(y)\right)^2\right] \\
&\quad+ n^{-2} \sum_{i=1}^n \Var\left(\frac{(\EE[\delta_i \mid \bb{X}_i] - \pi(\bb{X}_i))}{\pi(\bb{X}_i)} \sum_{k=0}^m F_{Y_i \mid \bb{X}_i}(k/m) \, b_{m,k}(y)\right).
\end{aligned}
\]
Given that $\Var(\delta_i \mid \bb{X}_i) = \pi(\bb{X}_i) (1 - \pi(\bb{X}_i))$ and $\EE[\delta_i \mid \bb{X}_i] - \pi(\bb{X}_i) = 0$, it follows that
\[
\Var(B_{n,m}) = n^{-1} \, \EE\left[\frac{(1 - \pi(\bb{X}_1))}{\pi(\bb{X}_1)} \left(\sum_{k=0}^m F_{Y_1 \mid \bb{X}_1}(k/m) \, b_{m,k}(y)\right)^2\right].
\]
For the covariance, note that
\begin{equation}\label{eq:expectation.B.n.m.zero}
\begin{aligned}
\EE[B_{n,m}]
&= n^{-1} \sum_{i=1}^n \EE\left[\frac{(\delta_i - \pi(\bb{X}_i))}{\pi(\bb{X}_i)} \sum_{k=0}^m F_{Y_i \mid \bb{X}_i}(k/m)\right] b_{m,k}(y) \\
&= n^{-1} \sum_{i=1}^n \EE\left[\EE\left[\frac{(\delta_i - \pi(\bb{X}_i))}{\pi(\bb{X}_i)} \sum_{k=0}^m F_{Y_i \mid \bb{X}_i}(k/m) \mid \bb{X}_i\right]\right] b_{m,k}(y) \\
&= n^{-1} \sum_{i=1}^n \EE\left[\frac{(\EE[\delta_i \mid \bb{X}_i] - \pi(\bb{X}_i))}{\pi(\bb{X}_i)} \sum_{k=0}^m F_{Y_i \mid \bb{X}_i}(k/m)\right] b_{m,k}(y) \\
&= 0,
\end{aligned}
\end{equation}
and thus $\Cov(A_{n,m},B_{n,m}) = \EE[A_{n,m} B_{n,m}]$. Also, the summands of $A_{n,m}$ and $B_{n,m}$ are independent for different indices $i$, so the cross terms are zero:
\[
\EE[A_{n,m} B_{n,m}]
= n^{-2} \sum_{i=1}^n \EE\left[\frac{\delta_i (\delta_i - \pi(\bb{X}_i))}{\pi(\bb{X}_i)^2} \left(\sum_{k=0}^m \ind_{\{Y_i \leq k/m\}} b_{m,k}(y)\right) \left(\sum_{k=0}^m F_{Y_i \mid \bb{X}_i}(k/m) \, b_{m,k}(y)\right)\right].
\]
Again, the independence between $\delta_i$ and $Y_i$, conditionally on $\bb{X}_i$, yields
\[
\begin{aligned}
&\EE[A_{n,m} B_{n,m}] \\
&= n^{-2} \sum_{i=1}^n \EE\left[\frac{\EE[\delta_i (\delta_i - \pi(\bb{X}_i)) \mid \bb{X}_i]}{\pi(\bb{X}_i)^2} \EE\left[\sum_{k=0}^m \ind_{\{Y_i \leq k/m\}} b_{m,k}(y) \mid \bb{X}_i\right] \left(\sum_{k=0}^m F_{Y_i \mid \bb{X}_i}(k/m) \, b_{m,k}(y)\right)\right] \\
&= n^{-2} \sum_{i=1}^n \EE\left[\frac{\EE[\delta_i (\delta_i - \pi(\bb{X}_i)) \mid \bb{X}_i]}{\pi(\bb{X}_i)^2} \left(\sum_{k=0}^m F_{Y_i \mid \bb{X}_i}(k/m) \, b_{m,k}(y)\right)^2\right].
\end{aligned}
\]
Since $\delta_i^2 = \delta_i$ and $\EE[\delta_i \mid \bb{X}_i] = \pi(\bb{X}_i)$, we have
\[
\begin{aligned}
\EE[A_{n,m} B_{n,m}]
&= n^{-2} \sum_{i=1}^n \EE\left[\frac{\pi(\bb{X}_i) (1 - \pi(\bb{X}_i))}{\pi(\bb{X}_i)^2} \left(\sum_{k=0}^m F_{Y_i \mid \bb{X}_i}(k/m) \, b_{m,k}(y)\right)^2\right] \\
&= n^{-1} \, \EE\left[\frac{(1 - \pi(\bb{X}_1))}{\pi(\bb{X}_1)} \left(\sum_{k=0}^m F_{Y_1 \mid \bb{X}_1}(k/m) \, b_{m,k}(y)\right)^2\right] \\
&= \Var(B_{n,m}).
\end{aligned}
\]
It follows that
\begin{equation}\label{eq:prop:var.hat.F.n.m.end}
\begin{aligned}
\Var(\widehat{F}_{n,m}(y))
&= \Var(A_{n,m}) - 2 \, \EE[A_{n,m} B_{n,m}] + \Var(B_{n,m}) \\[2mm]
&= \Var(\widetilde{F}_{n,m}(y)) - \Var(B_{n,m}) \\
&= \Var(\widetilde{F}_{n,m}(y)) - n^{-1} \, \EE\left[\frac{(1 - \pi(\bb{X}_1))}{\pi(\bb{X}_1)} \left(\sum_{k=0}^m F_{Y_1 \mid \bb{X}_1}(k/m) \, b_{m,k}(y)\right)^2\right].
\end{aligned}
\end{equation}
Finally, by applying the Taylor expansion under Assumption~\ref{ass:4},
\begin{equation}\label{eq:Taylor.2}
F_{Y_i \mid \bb{X}_i}(k/m) = F_{Y_i \mid \bb{X}_i}(y) + f_{Y_i \mid \bb{X}_i}(y) (k/m - y) + \OO_y(|k/m - y|^2),
\end{equation}
we obtain
\begin{equation}\label{eq:near.end}
\begin{aligned}
&n^{-1} \, \EE\left[\frac{(1 - \pi(\bb{X}_1))}{\pi(\bb{X}_1)} \left(\sum_{k=0}^m F_{Y_1 \mid \bb{X}_1}(k/m) \, b_{m,k}(y)\right)^2\right] \\
&\quad= n^{-1} \, \EE\left[\frac{(1 - \pi(\bb{X}_1))}{\pi(\bb{X}_1)} \{F_{Y_1 \mid \bb{X}_1}(y)\}^2\right] \\
&\qquad- 2 n^{-1} \, \EE\left[\frac{(1 - \pi(\bb{X}_1))}{\pi(\bb{X}_1)} F_{Y_1 \mid \bb{X}_1}(y) f_{Y_1 \mid \bb{X}_1}(y) \sum_{k=0}^m (k/m - y) \, b_{m,k}(y)\right] \\
&\qquad+ \OO_y\left(\frac{n^{-1}}{\pi_{\min}} \left(\sum_{k=0}^m (k/m - y) \, b_{m,k}(y)\right)^2\right).
\end{aligned}
\end{equation}
The second term on the right-hand side is zero because $\sum_{k=0}^m (k - m y) \, b_{m,k}(y) = 0$. The error term is $\OO_y(n^{-1} m^{-1})$ by Jensen's inequality since
\[
\left(\sum_{k=0}^m (k/m - y) \, b_{m,k}(y)\right)^2 \leq \sum_{k=0}^m (k/m - y)^2 \, b_{m,k}(y) = m^{-2} \sum_{k=0}^m (k - m y)^2 \, b_{m,k}(y) = m^{-1} y (1 - y).
\]
The error term depends on $y$ in \eqref{eq:near.end} because $f_{Y_1 \mid \bb{X}_1}'$ may not be bounded on $(0,1)$. Therefore,
\[
\begin{aligned}
n^{-1} \, \EE\left[\frac{(1 - \pi(\bb{X}_1))}{\pi(\bb{X}_1)} \left(\sum_{k=0}^m F_{Y_1 \mid \bb{X}_1}(k/m) \, b_{m,k}(y)\right)^2\right]
&= n^{-1} \, \EE\left[\frac{(1 - \pi(\bb{X}_1))}{\pi(\bb{X}_1)} \{F_{Y_1 \mid \bb{X}_1}(y)\}^2\right] \\
&\quad+ \OO_y(n^{-1} m^{-1}).
\end{aligned}
\]
Substituting this expression back into \eqref{eq:prop:var.hat.F.n.m.end} yields the conclusion.
\end{proof}

\begin{proof}[Proof of Theorem~\ref{thm:hat.F.n.m.asymp.normality}]
Let $y\in (0,1)$ be given. We utilize the asymptotic representation \eqref{eq:expansion.prop.7} derived in the proof of Proposition~\ref{prop:var.hat.F.n.m}:
\[
\widehat{F}_{n,m}(y) = \widetilde{F}_{n,m}(y) - B_{n,m} + \OO_{L^2}(n^{-1}),
\]
where $B_{n,m}$ is defined as
\begin{equation}\label{eq:U.i}
B_{n,m} = n^{-1} \sum_{i=1}^n U_{i,m}, \qquad U_{i,m} = \frac{(\delta_i - \pi(\bb{X}_i))}{\pi(\bb{X}_i)} \sum_{k=0}^m F_{Y_i \mid \bb{X}_i}(k/m) \, b_{m,k}(y).
\end{equation}
The expression for the standardized estimator now follows:
\[
n^{1/2} \{\widehat{F}_{n,m}(y) - F(y)\} = n^{1/2} \{\widetilde{F}_{n,m}(y) - B_{n,m} - F(y)\} + \OO_{L^2}(n^{-1/2}).
\]

For the main term, we know $\EE[\widetilde{F}_{n,m}(y)] = \mathcal{B}_m(F)(y)$ by \eqref{eq:prop:bias.tilde.F.n.m.first} and $\EE[B_{n,m}] = 0$ by \eqref{eq:expectation.B.n.m.zero}. We decompose it as:
\[
n^{1/2} \{\widetilde{F}_{n,m}(y) - B_{n,m} - F(y)\} = n^{1/2} \{\widetilde{F}_{n,m}(y) - \mathcal{B}_m(F)(y) - B_{n,m}\} + n^{1/2} \{\mathcal{B}_m(F)(y) - F(y)\}.
\]
The second term is the bias term, analyzed in the proof of Theorem~\ref{thm:tilde.F.n.m.asymp.normality} (Eq.~\eqref{eq:bias.part.tilde.F.n.m.asymp.normality}). The first term is the stochastic part. Let $W_{i,m} = Z_{i,m} - U_{i,m}$, where $Z_{i,m}$ is defined in \eqref{eq:Z.i} and $U_{i,m}$ in \eqref{eq:U.i}. Then
\[
n^{1/2} \{\widetilde{F}_{n,m}(y) - \mathcal{B}_m(F)(y) - B_{n,m}\} = n^{-1/2} \sum_{i=1}^n W_{i,m}.
\]
The $W_{i,m}$'s are i.i.d.\ and centered, and $\Var(W_{1,m}) \to \nu^2(y) > 0$ as $n\to\infty$ by Proposition~\ref{prop:var.hat.F.n.m}.

We verify the Lindeberg condition for $W_{1,m}$: for every $\varepsilon>0$,
\begin{equation}\label{eq:Lindeberg.2}
\lim_{n\to\infty} \frac{1}{\Var(W_{1,m})} \EE\left[ W_{1,m}^2 \ind_{\{|W_{1,m}|^2\, > \, \varepsilon \hspace{0.2mm} n \, \Var(W_{1,m})\}}\right] = 0.
\end{equation}
By Assumption~\ref{ass:1}, $\pi(\bb{X}_i)\geq \pi_{\min}>0$. Using $\sum_{k=0}^m b_{m,k}(y)=1$ and $0\leq \mathcal{B}_m(F)(y)\leq 1$,
\[
|Z_{1,m}|\leq \pi_{\min}^{-1}+1, \qquad
|U_{1,m}|\leq \pi_{\min}^{-1},
\]
so $|W_{1,m}|\leq 2\pi_{\min}^{-1}+1 < \infty$. Since $\Var(W_{1,m})\to \nu^2(y)>0$, we have $\varepsilon \hspace{0.2mm} n \, \Var(W_{1,m})\to \infty$, hence for all sufficiently large $n$, the indicator $\smash{\ind_{\{|W_{1,m}|^2\, > \, \varepsilon \hspace{0.2mm} n \, \Var(W_{1,m})\}}}$ is equal to zero almost surely. Therefore the expectation in \eqref{eq:Lindeberg.2} is eventually $0$, and the Lindeberg condition holds.
\end{proof}

\appendix

\begin{appendices}

\section{Reproducibility}\label{app:code}

The \textsf{R} code that generated the figures, the simulation study results, and the real-data application is available online in the GitHub repository of \citet{GharbiJedidiKhardaniOuimet2026github}.

\section{List of acronyms}\label{app:acronyms}

\begin{tabular}{llll}
&BISE &\hspace{20mm} &boundary integrated squared error \\
&CDF &\hspace{20mm} &cumulative distribution function \\
&i.i.d. &\hspace{20mm} &independent and identically distributed \\
&I-IPW &\hspace{20mm} &integrated inverse probability weighted \\
&IPW &\hspace{20mm} &inverse probability weighting/weighted \\
&ISE &\hspace{20mm} &integrated squared error \\
&KDE &\hspace{20mm} &kernel density estimator \\
&LSCV &\hspace{20mm} &least-squares cross-validation \\
&MAR &\hspace{20mm} &missing at random \\
&MISE &\hspace{20mm} &mean integrated squared error \\
&MSE &\hspace{20mm} &mean squared error \\
&NHANES &\hspace{20mm} &National Health and Nutrition Examination Survey \\
&NMAR &\hspace{20mm} &not missing at random (nonignorable) \\
\end{tabular}

\end{appendices}

\section*{Acknowledgments}
\addcontentsline{toc}{section}{Acknowledgments}

The authors thank the three referees for their careful reading and constructive comments, which led to substantial improvements in the clarity and quality of the paper.

\section*{Funding}
\addcontentsline{toc}{section}{Funding}

The work of Wissem Jedidi is supported by the Ongoing Research Funding program (ORF-2026-162) at King Saud University, Riyadh, Saudi Arabia. Fr\'ed\'eric Ouimet is supported by the start-up fund (1729971) from the Universit\'e du Qu\'ebec à Trois-Rivi\`eres.

\addcontentsline{toc}{section}{References}
\setlength{\bibsep}{3pt plus 0ex}

\bibliographystyle{authordate1}
\bibliography{bib}

\end{document}